\documentclass[11pt]{article}
\usepackage[margin=1in]{geometry}
\usepackage{amsmath,amssymb,amsthm}
\usepackage{booktabs}
\usepackage{graphicx}
\usepackage{tikz}
\usepackage[dvipsnames]{xcolor}
\usepackage[colorlinks=true,linkcolor=MidnightBlue,citecolor=MidnightBlue,
            urlcolor=MidnightBlue]{hyperref}
\usepackage{url}

\newtheorem{theorem}{Theorem}[section]
\newtheorem{lemma}[theorem]{Lemma}
\newtheorem{proposition}[theorem]{Proposition}
\newtheorem{corollary}[theorem]{Corollary}
\newtheorem{conjecture}[theorem]{Conjecture}
\theoremstyle{definition}

\theoremstyle{remark}
\newtheorem{remark}[theorem]{Remark}

\newcommand{\OFG}{\mathrm{OFG}}
\newcommand{\disp}{\operatorname{disp}}
\newcommand{\dgrid}{d}
\newcommand{\Z}{\mathbb{Z}}
\newcommand{\Mm}{M_{m,n}}

\newcommand{\diam}{\operatorname{diam}}

\title{Height functions on the $m \times n$ {Miura}-ori flip graph:\\
       degree sequence and diameter}
\author{Chakshu Gupta\\
  \small College of Computing, Georgia Institute of Technology\\
  \small \texttt{cgupta65@gatech.edu}}
\date{}

\begin{document}
\maketitle

\begin{abstract}
The state space of an origami crease pattern forms a flip graph, whose vertices
are the flat-foldable mountain--valley assignments and whose edges join
assignments differing by a single face flip. For the $m \times n$ Miura-ori,
the degree sequence and diameter of this graph are known only for two rows.
Each assignment maps to an integer height function on the grid, under which a
vertex's degree equals its number of local extrema. In this model the vertices
of each degree up to five are counted by an explicit polynomial in $m$ and $n$,
valid once both exceed a bound that grows with the degree, and the height
functions realising those degrees are described explicitly. A closed-form lower bound for
the diameter holds for all $m$ and $n$, and the matching upper bound reduces to
an extremal inequality for $1$-Lipschitz functions on the grid, recovering the
two-row distance at $m=2$. Since each invariant is read from the extrema or
height differences of a grid function, the same reduction applies to any
flip-graph quantity expressible in those terms.
\end{abstract}

\section{Introduction}\label{sec:intro}

Origami is the art of folding a flat sheet of paper into a three-dimensional
figure through a sequence of folds, each bending the paper along a straight
line~\cite{Lang2006facet}. When the figure is unfolded, the sheet retains a
pattern of crease lines, each recording whether the paper was folded convexly
(a mountain fold) or concavely (a valley fold). A valid assignment of mountain
and valley labels to these creases, one that allows the sheet to fold flat at
every vertex, is called a flat-foldable mountain--valley assignment; the
collection of crease lines together with such an assignment is an origami crease
pattern. Such patterns have found broad application beyond the art form itself,
spanning biomedical devices, architectural facades, robotics, and deployable
space structures~\cite{Meloni2021engineering}. A particularly prominent example
is the Miura-ori~\cite{Miura1994map}, a rigid origami tessellation that allows
a flat surface to pack tightly and expand along a single degree of freedom.

In combinatorics, a flip graph is a graph whose vertices represent the valid
states of a discrete structure and whose edges connect pairs that differ by a
single elementary operation called a
flip~\cite{SleatorTarjanThurston1988rotation, Wagner2022connectivity}, such as
replacing one diagonal of a quadrilateral with the other in a polygon
triangulation. For origami, the states are the flat-foldable mountain--valley
assignments of a crease pattern $C$, and two assignments are adjacent if they differ
by switching every crease on a single face between mountain and valley, provided the
result is again flat-foldable. The resulting origami flip graph
$\OFG(C)$~\cite{Hull2022maximal} can fail to be connected for some crease
patterns but is connected for the Miura-ori~\cite{Akitaya2020faceflips}. For the
$m \times n$ Miura-ori $\Mm$ (a grid of $m$ rows and $n$ columns of
parallelograms), a bijection~\cite{GineproHull2014counting} identifies
$\OFG(\Mm)$ with the $3$-colouring reconfiguration graph of the $m \times n$
grid graph. Every $3$-colouring of this grid lifts to an integer-valued
height function~\cite{Cereceda2009mixing}, so the flip-graph distance
between two assignments is half the $\ell_1$ distance between their lifts,
minimised over a global integer offset~\cite{Johnson2016shortest}.
For $m = 2$, both natural invariants of $\OFG(\Mm)$ are
known~\cite{Christensen2025origami}, with the degree sequence (the number
of vertices of each degree) determined by a recurrence on the small
per-column state space of the two-row pattern, and the diameter (the
longest shortest path between two vertices) by a median argument on the
relative height of two such lifts.

This paper drives the height-function reduction to exact counts of
degree-$d$ vertices for each $d \le 5$ and to a closed-form lower bound on
the diameter. The reduction identifies the degree of a vertex with the
number of strict local extrema of its height function
(Lemma~\ref{lem:degree-extrema}), and single-extremum height functions with
distance cones on the grid (Lemma~\ref{lem:cone}). Three contributions follow. First, the number of
degree-$d$ vertices is determined exactly: for $d \in \{2,3\}$ when
$m,n \ge 2$, for $d = 4$ when $\min(m,n) \ge 3$, and for $d = 5$ when
$\min(m,n) \ge 4$ (Theorems~\ref{thm:deg2}, \ref{thm:deg3},
\ref{thm:deg4count}, and~\ref{thm:deg5count}); each such vertex is a single
distance cone or a pair of cones. Second, a closed-form quantity
$D(m,n)$ is a lower bound on $\diam \OFG(\Mm)$ for all $m,n \ge 1$
(Theorem~\ref{thm:diam}), and the matching upper bound is reduced to an
extremal inequality over integer $1$-Lipschitz functions on $G_{m,n}$
(Proposition~\ref{prop:reduction}); the reduction is discharged for $m = 2$
(Proposition~\ref{prop:extremal-m2}) and verified by enumeration for grids up to
$4 \times 4$ and $3 \times 5$, leaving the upper bound for $m \ge 3$ open.
Third, the degree-$5$ case is the first whose count is cubic in $(m,n)$ and
whose height functions may carry an interior extremum, a feature absent from
the boundary-confined lower degrees; its count follows from a single
ridge-counting lemma (Lemma~\ref{lem:ridge}) that locates the maxima of a
cone pair from its two apexes. All
results are checked against a direct enumeration of the flip graph in a
self-contained
codebase.\footnote{\url{https://github.com/ChakshuGupta13/lab}} 

\section{The height-function reduction}\label{sec:method}

Every construction in this paper lives on the grid graph $G_{m,n}$, with
vertex set $\{(i,j) : 1 \le i \le m,\, 1 \le j \le n\}$ and each vertex joined
to its horizontal and vertical neighbours; its graph distance $\dgrid$ equals
the Manhattan distance
$\dgrid\big((i_1,j_1),(i_2,j_2)\big) = |i_1 - i_2| + |j_1 - j_2|$, since each
edge changes one coordinate by one. The bijection
of~\cite{GineproHull2014counting} carries the flat-foldable mountain--valley
assignments of $\Mm$ to the proper $3$-colourings of $G_{m,n}$ with a fixed
corner colour, sending each face to a vertex and each face flip to the
recolouring of a single vertex~\cite[\S2]{Christensen2025origami}. Releasing the
corner, each assignment corresponds to the three colourings related by the
global colour rotation $\gamma \mapsto \gamma+1$, so
\begin{equation}\label{eq:iso}
  \OFG(\Mm) \;\cong\; R_3(G_{m,n}) / (\Z/3\Z),
\end{equation}
where $R_3(G_{m,n})$ joins proper $3$-colourings that differ at a single
vertex.

Every proper $3$-colouring of $G_{m,n}$ lifts to an
integer-valued height function $h$ with $|h(u) - h(v)| = 1$ across each edge
$uv$: each colour difference modulo $3$ forces a step of $\pm1$, and these steps
sum to zero around every $4$-cycle face, hence around every cycle. The lift is
made unique by the normalisation $h(1,1) = 0$~\cite{Cereceda2009mixing}. Each
vertex of $\OFG(\Mm)$ is henceforth identified with its height function, and
the colour at a grid vertex $v$ is $h(v)$ modulo~$3$. A grid vertex $v$ is a
strict local maximum of $h$ if every neighbour $u$ satisfies $h(u) = h(v) - 1$,
a strict local minimum if every neighbour satisfies $h(u) = h(v) + 1$, and a
strict local extremum in either case.

\begin{lemma}[Degree-extrema correspondence]\label{lem:degree-extrema}
For $mn \ge 3$, the degree of a vertex of $\OFG(\Mm)$ equals the number of
strict local extrema of its height function~$h$.
\end{lemma}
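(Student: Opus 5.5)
We work with the identification \eqref{eq:iso}: the degree of a vertex of $\OFG(\Mm)$ is the number of neighbours of the orbit $\{c, c+1, c+2\}$ in the quotient $R_3(G_{m,n})/(\Z/3\Z)$. The plan has three steps. First, show that in $R_3(G_{m,n})$ the degree of a colouring $c$ equals the number of strict local extrema of its lift $h$. Second, show that the quotient map is a covering, so degrees are preserved. Third, note that the extrema count does not depend on which representative of the orbit is chosen.

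\emph{Step 1 (recolourings are extrema).} A recolouring of $c$ at a vertex $v$ is a choice of a new colour $\gamma \ne c(v)$ that avoids every neighbour colour $c(u)$. Each neighbour $u$ has $h(u) = h(v) \pm 1$, so its colour is $c(v) \pm 1 \pmod 3$.
\begin{itemize}
\item If the neighbours use both residues $c(v)+1$ and $c(v)-1$, no recolouring exists.
\item If all neighbours share one residue, exactly one new colour is possible.
\end{itemize}
The neighbours share one residue precisely when every neighbour height is $h(v)-1$ or every neighbour height is $h(v)+1$, that is, when $v$ is a strict local extremum. In that case the recolouring corresponds to $h(v) \mapsto h(v) \mp 2$. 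Since $mn \ge 2$, every vertex has at least one neighbour, so the vacuous case of an isolated vertex does not arise. Hence $\deg_{R_3}(c)$ equals the number of strict local extrema of $h$.

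\emph{Step 2 (the quotient preserves degree).} A neighbour of the orbit in the quotient is an orbit $[c']$ with $c'$ adjacent to some $c + k$. We need two facts.
\begin{enumerate}
\item Distinct recolourings of $c$ lie in distinct orbits.
\item No recolouring of $c$ lies in the orbit of $c$ itself.
\end{enumerate}
Together these show that the neighbours of $c$ map bijectively onto the neighbours of $[c]$. Neighbours of $c+k$ add nothing new, because they are exactly the rotations of the neighbours of $c$.

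For fact 2, suppose a single-vertex recolouring of $c$ equals $c + k$ with $k \ne 0$. Then $c + k$ agrees with $c$ at every vertex except $v$. But $c + k$ differs from $c$ at every vertex, and $mn \ge 2$ gives a vertex other than $v$, a contradiction.

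For fact 1, suppose the recolouring $c_v$ at $v$ and the recolouring $c_w$ at $w$ (with $v \ne w$) satisfy $c_v = c_w + k$.
\begin{itemize}
\item If $k = 0$, the two colourings differ exactly at $\{v, w\}$, which is impossible since each differs from $c$ at one vertex only.
\item If $k \ne 0$, then $c_v$ and $c_w + k$ must agree everywhere. Now $c_v$ agrees with $c$ off $v$, while $c_w + k$ differs from $c$ everywhere off $w$.
\end{itemize}
So the sub-case $k \ne 0$ forces every vertex other than $v$ and $w$ to disagree with itself, which requires $mn \le 2$. This is where the hypothesis $mn \ge 3$ enters. It is the main point of care: for the $1 \times 2$ grid the quotient really does collapse degrees.

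\emph{Step 3 (well-definedness).} Rotating the colours does not change the lift $h$ beyond an additive shift, since only colour differences determine the steps. In particular, every representative of the orbit has the same set of strict local extrema, so the count in Step 1 is well defined on vertices of $\OFG(\Mm)$.

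We expect Step 1 and Step 3 to be routine, with the real work in the counting argument of Step 2.
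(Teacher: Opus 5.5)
Your proposal is correct and follows essentially the paper's argument. Flippable cells are exactly the strict local extrema, each with a forced new colour, and distinct flips give distinct rotation classes because, when $mn\ge 3$, some cell outside $\{v,w\}$ is left unchanged by both flips, whereas a nontrivial rotation would change it. Your explicit checks that a flip never lands in its own orbit and that no cell is isolated are small points the paper leaves implicit.
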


\begin{proof}
By the identification~\eqref{eq:iso}, a neighbour of $h$ in $\OFG(\Mm)$ is the
rotation class of a proper $3$-colouring differing in colour from $h$ at a single
vertex, so a grid vertex $v$ is flippable exactly when some colour other than its
own is absent from its neighbourhood.
Set $c = h(v) \bmod 3$, the colour of $v$. Each
neighbour $u$ has $h(u) = h(v) \pm 1$ and hence colour $(c \pm 1) \bmod 3$, so
the two colours other than $c$ are $c+1$ and $c-1$. The colour $c+1$ is absent
exactly when no neighbour has height $h(v) + 1$, that is when $v$ is a strict
local maximum; symmetrically $c-1$ is absent exactly when $v$ is a strict local
minimum. Thus $v$ is flippable exactly when it is a strict local extremum, and
the new colour is then forced: at a strict local maximum the neighbours all have
colour $c-1$, so $v$ can only be recoloured $c+1$, and dually $c-1$ at a minimum.
Each flippable vertex therefore yields exactly one
neighbour of $h$, and every neighbour arises from some such flip.

It remains to see that distinct flips yield distinct OFG vertices. Flipping $v$
and flipping $v'\ne v$ recolour $h$ at the single cells $v$ and $v'$, so the two
resulting colourings agree at every cell outside $\{v,v'\}$ yet disagree at~$v$.
Two colourings represent the same OFG vertex only when related by the global
rotation~\eqref{eq:iso}; being unequal, the two flips would need a
nontrivial rotation, which changes the colour of every cell. Since
$mn \ge 3$, some cell lies outside $\{v,v'\}$, where the two flips agree, so no
rotation relates them, and they give distinct OFG vertices. The bound is sharp:
on the single edge ($mn = 2$) the two endpoint-flips differ by the rotation and
so coincide in $\OFG$, giving a vertex of degree~$1$ with two extrema. The degree
of $h$ is thus the number of flippable vertices, namely the number of strict
local extrema.
\end{proof}

\begin{lemma}[Cone Lemma]\label{lem:cone}
If $h$ has a unique strict local maximum $q$, then $h(v) = h(q) - \dgrid(q,v)$
for every $v$. Dually, if $h$ has a unique strict local minimum $p$, then
$h(v) = h(p) + \dgrid(p,v)$.
\end{lemma}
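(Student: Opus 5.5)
Two inequalities, each from a different property of $h$, give the claim for the maximum; the minimum case then follows by applying the same argument to $-h$, which is again a height function and whose strict local maxima are exactly the strict local minima of $h$.

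\emph{Upper bound.} Since $|h(u)-h(w)| = 1$ across every edge, $h$ is $1$-Lipschitz for $\dgrid$. Following a shortest path from $q$ to $v$ gives $h(v) \ge h(q) - \dgrid(q,v)$ for every $v$, so this direction is immediate.

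\emph{Lower bound.} We show $h(v) \le h(q) - \dgrid(q,v)$ by steepest ascent. Start at $v_0 = v$. As long as the current vertex $v_k$ is not a strict local maximum, some neighbour has height $h(v_k)+1$, because every neighbour differs by exactly $\pm 1$ and $v_k$ is not a strict local maximum. Move to such a neighbour $v_{k+1}$.
\begin{itemize}
\item Heights strictly increase along the walk.
\item The grid is finite, so the walk stops after finitely many steps.
\item It can only stop at a strict local maximum, which by hypothesis is $q$.
\end{itemize}
The walk therefore reaches $q$ after exactly $h(q) - h(v)$ steps, each raising the height by one. A walk of that length from $v$ to $q$ forces $h(q) - h(v) \ge \dgrid(q,v)$. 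Combined with the upper bound, this gives $h(v) = h(q) - \dgrid(q,v)$.

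\emph{Degenerate grids.} For $mn = 1$ there are no neighbours, and the single vertex is vacuously both a maximum and the cone apex, so the formula holds trivially. Larger grids need no separate treatment.

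The only real point is recognising that the uniqueness hypothesis is what forces the ascent to terminate at $q$; once that is seen, the remaining steps are routine and I expect no serious obstacle.
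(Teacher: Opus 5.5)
Your proof is correct and follows essentially the same route as the paper's. The steepest-ascent walk from $v$ must end at the unique strict local maximum $q$ after $h(q)-h(v)$ steps, giving $\dgrid(q,v)\le h(q)-h(v)$; the $1$-Lipschitz bound along a shortest path gives the reverse inequality, and the minimum case follows by applying this to $-h$.
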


\begin{proof}
The minimum case follows by applying the maximum case to $-h$. For the maximum
case, fix a vertex $v \ne q$. Since $q$ is the only local maximum, $v$ is not a
local maximum, so it has a neighbour of height $h(v) + 1$. Repeating from that
neighbour gives a path from $v$ along which $h$ strictly increases; on the finite
grid it cannot continue forever, so it ends at a vertex with no higher neighbour,
namely the local maximum $q$. This path has $h(q) - h(v)$ steps, so
$\dgrid(q,v) \le h(q) - h(v)$. Conversely, along a shortest $v$--$q$ path each
of the $\dgrid(q,v)$ steps changes $h$ by exactly one, so
$h(q) - h(v) \le \dgrid(q,v)$. The two bounds give $h(v) = h(q) - \dgrid(q,v)$,
which also holds trivially at $v = q$.
\end{proof}

For each grid vertex $q$, the cone at $q$ is the height function
$h_q(v) := \dgrid\big(q,(1,1)\big) - \dgrid(q,v)$. Every vertex other than $q$
has a neighbour nearer $q$, so $\dgrid(q,\cdot)$ has its only local minimum at
$q$ and the cone $h_q$ has $q$, its \emph{apex}, as its only local maximum.

\begin{corollary}[Cone Classification]\label{cor:classify}
Let $m,n \ge 2$. The cone $h_q$ has degree $1 + \kappa(q)$, where $\kappa(q)$ is
the number of strict local maxima of $\dgrid(q,\cdot)$ on $G_{m,n}$, and
\[
  \kappa(q) =
  \begin{cases}
    1, & q \text{ a corner},\\
    2, & q \text{ a non-corner boundary vertex},\\
    4, & q \text{ an interior vertex}.
  \end{cases}
\]
Dually, the anti-cone $-h_p$ has a unique strict local minimum at $p$ and degree
$1+\kappa(p)$.
\end{corollary}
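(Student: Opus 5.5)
By Lemma~\ref{lem:degree-extrema} (valid since $mn \ge 4$), the degree of $h_q$ is its number of strict local extrema. The apex $q$ is its unique strict local maximum, as observed just before the statement. Since $h_q = \text{const} - \dgrid(q,\cdot)$, the strict local minima of $h_q$ are exactly the strict local maxima of $f := \dgrid(q,\cdot)$. This gives degree $1+\kappa(q)$. The anti-cone statement then follows by negation: $-h_p$ has $p$ as its unique strict local minimum, its strict local maxima are the strict local minima of $h_p$, and these are again counted by $\kappa(p)$. The only genuine work is computing $\kappa(q)$.

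Write $q=(a,b)$ and $f(i,j)=|i-a|+|j-b|$. This function is separable: $f(i,j) = f_1(i)+f_2(j)$ with $f_1(i)=|i-a|$ on the path $\{1,\dots,m\}$ and $f_2(j)=|j-b|$ on $\{1,\dots,n\}$. A grid vertex $(i,j)$ is a strict local maximum of $f$ exactly when every existing neighbour has smaller $f$. Vertical neighbours change only $f_1$ and horizontal neighbours change only $f_2$. So $(i,j)$ is a strict local maximum of $f$ iff $i$ is a strict local maximum of $f_1$ on its path and $j$ is a strict local maximum of $f_2$ on its path. The condition is vacuous in a direction with no neighbours, but since $m,n\ge2$ each coordinate has at least one neighbour.

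Hence $\kappa(q) = \kappa_1(a)\,\kappa_2(b)$, where $\kappa_1(a)$ is the number of strict local maxima of $|i-a|$ on $\{1,\dots,m\}$. On a path of length at least $2$, $|i-a|$ is strictly monotone on each side of $a$. Its strict local maxima are therefore exactly the endpoints $1$ and $m$ different from $a$; the apex itself is a strict local minimum, not a maximum, since $m\ge2$. So $\kappa_1(a)=1$ if $a\in\{1,m\}$ and $\kappa_1(a)=2$ otherwise, and likewise for $\kappa_2(b)$. Multiplying gives the three cases:
\begin{itemize}
\item a corner (both coordinates extreme) gives $1\cdot1=1$;
\item a non-corner boundary vertex (exactly one coordinate extreme) gives $1\cdot 2=2$;
\item an interior vertex gives $2\cdot2=4$.
\end{itemize}

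The step needing most care is the product factorisation. One must check that ``every neighbour is lower'' splits cleanly across the two directions, and that the hypothesis $m,n\ge2$ rules out degenerate paths where a coordinate has no neighbour and the factor would be ill-defined. Everything else is bookkeeping.
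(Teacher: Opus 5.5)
Your proof is correct and follows the paper's route: the degree comes from Lemma~\ref{lem:degree-extrema}, the minima of $h_q$ are identified with the maxima of $\dgrid(q,\cdot)$, and the separable factorisation into the path functions $|t-a|$ and $|t-b|$ is combined with their endpoint maxima. The only cosmetic difference is the anti-cone case: the paper invokes the colour inversion $h\mapsto -h$ as a degree-preserving automorphism of $\OFG(\Mm)$, whereas you negate and recount extrema directly, which is equally valid.
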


\begin{proof}
By Lemma~\ref{lem:degree-extrema} the degree of $h_q$ equals the number of
its local extrema; $q$ is the unique maximum, and a vertex is a local
minimum of $h_q$ exactly when it is a local maximum of $\dgrid(q,\cdot)$, so
the degree is $1 + \kappa(q)$. Writing $q = (a,b)$, the distance
$\dgrid(q,(i,j)) = |i-a| + |j-b|$ is separable, and a grid neighbour changes a
single coordinate, so $(i,j)$ is a local maximum of $\dgrid(q,\cdot)$ iff $i$ is
a local maximum of $t \mapsto |t-a|$ on $\{1,\dots,m\}$ and $j$ is a local
maximum of $t \mapsto |t-b|$ on $\{1,\dots,n\}$. The function $t \mapsto |t-a|$ on a
path has local maxima at both endpoints $\{1,m\}$ when $1 < a < m$, and at
the single far endpoint when $a \in \{1, m\}$. Combining the two coordinates
gives $\kappa(q) \in \{1, 2, 4\}$. The colour inversion
$h\mapsto-h$ negates a height function and exchanges its maxima with its minima.
Since an edge of $\OFG(\Mm)$ is a flip at a strict local extremum, the inversion
preserves adjacency and is a degree-preserving automorphism; it carries the cone
$h_p$ to the anti-cone $-h_p$, which has its unique minimum at $p$ and degree
$1+\kappa(p)$.
\end{proof}

\section{The degree sequence}\label{sec:degrees}

The cones and anti-cones classified by Corollary~\ref{cor:classify} exhaust the
degree-$2$ and degree-$3$ vertices of $\OFG(\Mm)$; degree-$4$ is the first
degree where a height function has two maxima and two minima.

\begin{lemma}[Minimum degree]\label{lem:mindeg}
For $mn \ge 3$, every height function has at least one strict local maximum and
at least one strict local minimum. Every vertex of $\OFG(\Mm)$ has degree at
least~$2$, and degree~$2$ occurs.
\end{lemma}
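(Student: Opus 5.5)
The lemma has three parts: existence of a strict maximum and a strict minimum, the degree lower bound, and realisation of degree~$2$. I would prove them in that order.

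\emph{Existence of extrema.} The grid $G_{m,n}$ is finite, so $h$ attains a global maximum at some vertex $v$. Every neighbour $u$ of $v$ satisfies $|h(u)-h(v)|=1$ and $h(u)\le h(v)$, which forces $h(u)=h(v)-1$. Hence $v$ is a strict local maximum. Since $mn\ge 3$, the grid is connected with at least one edge, so $v$ has a neighbour and the condition is not vacuous. The same argument at a global minimum gives a strict local minimum.

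\emph{Degree lower bound.} The maximum and the minimum just found are distinct vertices, because their heights differ: $h$ changes by exactly $1$ across every edge, so $h$ is not constant. Lemma~\ref{lem:degree-extrema} applies since $mn\ge 3$, and it identifies the degree with the number of strict local extrema. That number is at least $2$, so every vertex of $\OFG(\Mm)$ has degree at least~$2$.

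\emph{Degree $2$ occurs.} The natural witness is the corner cone $h_{(1,1)}$. Its unique strict local maximum is its apex $(1,1)$. Its strict local minima are the strict local maxima of $\dgrid((1,1),\cdot)$. That distance is separable, $(i-1)+(j-1)$, so its strict local maxima are exactly the pairs $(i,j)$ in which each coordinate is a local maximum of its one-dimensional factor, which leaves only the far corner $(m,n)$.

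For $m,n\ge 2$ this is Corollary~\ref{cor:classify} with $\kappa=1$, giving degree $1+1=2$. I expect the only obstacle to be the degenerate shapes $m=1$ or $n=1$, which Corollary~\ref{cor:classify} excludes. There the grid is a path with at least $3$ vertices, and I would check directly that the distance from an endpoint is monotone along the path, so its only local maximum is the far endpoint. The endpoint cone therefore again has exactly two extrema, and by Lemma~\ref{lem:degree-extrema} it has degree~$2$. This covers all $mn\ge 3$ and completes the proof.
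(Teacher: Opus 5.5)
Your proof is correct and follows the paper's argument. A global maximum and a global minimum give the two extrema, they are distinct because $h$ is non-constant, and degree~$2$ is witnessed by a corner gradient: your $h_{(1,1)}$ is exactly $-g$ for the paper's witness $g(i,j)=(i-1)+(j-1)$. The only difference is that the paper checks the two extrema of $g$ directly from monotonicity along rows and columns, which handles $m=1$ or $n=1$ uniformly, whereas routing through Corollary~\ref{cor:classify} requires your separate path case.
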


\begin{proof}
Across any edge the two heights differ by one, so $h$ is non-constant and its
maximum height exceeds its minimum. A vertex of maximum height has no higher
neighbour, so every neighbour is one lower and it is a strict local maximum;
dually, a vertex of minimum height is a strict local minimum. These two vertices
are distinct, since the maximum and minimum heights differ, so $h$ has at least
two strict local extrema and, by Lemma~\ref{lem:degree-extrema}, degree at
least~$2$.

For attainment, take $g(i,j)=(i-1)+(j-1)$. Adjacent cells differ in one
coordinate by one, so $g$ is a height function. Since $g$ increases along every
row and every column, the only cell below all of its neighbours is the corner
$(1,1)$ and the only cell above all of its neighbours is the corner $(m,n)$;
these are its sole strict local extrema, so $g$ has degree~$2$ by
Lemma~\ref{lem:degree-extrema}.
\end{proof}

\begin{theorem}[Degree-2 vertices]\label{thm:deg2}
For $m,n \ge 2$, the graph $\OFG(\Mm)$ has exactly four vertices of degree
$2$, the corner gradients whose unique minimum and unique maximum lie at
opposite corners:
\[
  h_\varepsilon(i,j) = \varepsilon_1 (i-1) + \varepsilon_2 (j-1),
  \qquad \varepsilon=(\varepsilon_1,\varepsilon_2) \in \{+1,-1\}^2 .
\]
\end{theorem}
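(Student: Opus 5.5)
By Lemma~\ref{lem:degree-extrema} (applicable since $mn\ge 4$), a vertex of degree $2$ is a height function with exactly two strict local extrema. By Lemma~\ref{lem:mindeg} one is a strict local maximum $q$ and the other a strict local minimum $p$. So $h$ has a \emph{unique} strict local maximum and a \emph{unique} strict local minimum. The plan is to apply the Cone Lemma~\ref{lem:cone} to both and play the two descriptions against each other.

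From the maximum side, Lemma~\ref{lem:cone} gives $h(v)=h(q)-\dgrid(q,v)$ for all $v$. So $h$ is a shifted cone at $q$, and its strict local minima are exactly the strict local maxima of $\dgrid(q,\cdot)$. By Corollary~\ref{cor:classify} (valid for $m,n\ge 2$) there are $\kappa(q)$ of these. Uniqueness of the minimum forces $\kappa(q)=1$, so $q$ is a corner. The minimum $p$ is then the unique local maximum of $\dgrid(q,\cdot)$, which is the opposite corner (both coordinates at their far endpoints, by the separability argument in Corollary~\ref{cor:classify}). Writing $q=(a,b)$ with $a\in\{1,m\}$ and $b\in\{1,n\}$, we get $h(i,j)=h(q)-|i-a|-|j-b|$. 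Each absolute value is linear in its coordinate, $|i-a|=\pm(i-1)+\text{const}$, with sign $+$ if $a=1$ and $-$ if $a=m$, and similarly for $j$. Hence $h(i,j)=\varepsilon_1(i-1)+\varepsilon_2(j-1)+C$ for suitable signs, and the normalisation $h(1,1)=0$ forces $C=0$.

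Conversely, I will check that each $h_\varepsilon$ has degree $2$. This is the argument already given for $g=h_{(+1,+1)}$ in Lemma~\ref{lem:mindeg}: $h_\varepsilon$ is strictly monotone along every row and column, so only the two opposite corners are strict extrema, and both are. The four sign choices give four distinct normalised height functions, hence four distinct vertices, since vertices correspond bijectively to normalised lifts (the quotient by $\Z/3\Z$ is exactly absorbed by fixing $h(1,1)=0$).

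The only delicate point is this last identification, namely that distinct normalised height functions are distinct vertices of $\OFG(\Mm)$. The paper identifies vertices with normalised lifts, so I would cite that identification rather than reprove it. Everything else is a direct combination of Lemma~\ref{lem:cone} with the count $\kappa(q)=1$ from Corollary~\ref{cor:classify}.
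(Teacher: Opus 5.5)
Your proof is correct and follows the paper's argument essentially step for step. Lemma~\ref{lem:mindeg} gives a unique maximum $q$, Lemma~\ref{lem:cone} makes $h$ the cone at $q$, Corollary~\ref{cor:classify} forces $q$ to be a corner so that $h$ is a corner gradient, and the converse reuses the monotonicity argument of Lemma~\ref{lem:mindeg}. The only point you leave implicit is that the four $h_\varepsilon$ are pairwise distinct; the paper gets this from their distinct maxima (equivalently, from their values at $(2,1)$ and $(1,2)$), which uses $m,n\ge 2$.
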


\begin{proof}
Each $h_\varepsilon$ is a height function, since adjacent cells differ in one
coordinate by one, and $h_\varepsilon(1,1)=0$. As in Lemma~\ref{lem:mindeg}, the
gradient $h_{++}(i,j)=(i-1)+(j-1)$ has unique minimum $(1,1)$, unique maximum
$(m,n)$, and degree~$2$. By the same argument each $h_\varepsilon$ has its unique
maximum at the corner with row $m$ if $\varepsilon_1=+1$ and row $1$ otherwise,
column $n$ if $\varepsilon_2=+1$ and column $1$ otherwise, and its unique minimum
at the opposite corner. Since $m,n \ge 2$ these four corners are distinct, so the
$h_\varepsilon$ have distinct maxima and hence are pairwise distinct, giving at
least four vertices of degree~$2$.

Conversely, let $h$ have exactly two extrema; by Lemma~\ref{lem:mindeg} they are
a unique minimum and a unique maximum $q$. By Lemma~\ref{lem:cone} $h$ is the cone at
$q$, which by Corollary~\ref{cor:classify} has degree $2$ only when $q$ is a
corner. The cone at a corner is one of the four gradients $h_\varepsilon$, so
$h$ is one of them, and there are exactly four.
\end{proof}

For $m=2$ these recover the four degree-$2$ vertices
of~\cite{Christensen2025origami}.

\begin{theorem}[Degree-3 count]\label{thm:deg3}
For $m,n \ge 2$, the number of degree-$3$ vertices of $\OFG(\Mm)$ is
$4(m+n-4)$.
\end{theorem}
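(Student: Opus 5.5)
The plan is to show that a degree-$3$ height function is exactly a cone or anti-cone whose apex is a non-corner boundary vertex, and then to count those apexes. Let $h$ have exactly three strict local extrema. By Lemma~\ref{lem:mindeg} it has at least one strict local maximum and at least one strict local minimum. Three extrema therefore split as one maximum and two minima, or two maxima and one minimum. In the first case $h$ has a unique strict local maximum $q$, so by Lemma~\ref{lem:cone} $h$ is the cone $h_q$, up to the normalisation $h(1,1)=0$, which fixes the additive constant. In the second case $h$ has a unique strict local minimum $p$, and dually $h$ is the anti-cone at $p$.

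Corollary~\ref{cor:classify} gives the degree of a cone as $1+\kappa(q)$, so degree $3$ forces $\kappa(q)=2$. This holds exactly when $q$ is a non-corner boundary vertex, and the same applies to anti-cones. Conversely, every cone and every anti-cone at such a vertex has degree $3$. The degree-$3$ vertices are therefore exactly these cones and anti-cones.

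For $m,n\ge 2$ the grid has $2(m-2)+2(n-2)=2(m+n-4)$ non-corner boundary vertices. Distinct apexes give distinct cones, since the apex is the unique maximum, so there are $2(m+n-4)$ cones of degree $3$, and likewise $2(m+n-4)$ anti-cones. It remains to check that no cone coincides with an anti-cone. A degree-$3$ cone has one maximum and two minima, while a degree-$3$ anti-cone has two maxima and one minimum. The extremum types differ, so the two families are disjoint and the total is $4(m+n-4)$.

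The main point needing care is that a height function is identified with an OFG vertex only after normalisation. The cone at $q$ must be the unique normalised function with that shape, which holds because Lemma~\ref{lem:cone} determines $h$ up to the additive constant $h(q)$. Degenerate sizes should also be checked. When $m=2$ or $n=2$, some sides have no non-corner vertices and the formula still counts correctly; for example, $m=n=2$ gives $0$, consistent with the $4$-cycle having only degree-$2$ vertices. The hypothesis $m,n\ge 2$ guarantees $mn\ge 3$, so Lemma~\ref{lem:degree-extrema} applies.
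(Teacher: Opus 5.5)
Your proposal is correct and follows the paper's proof. You split the three extrema as one maximum with two minima or the reverse, identify each case as a cone or anti-cone via Lemma~\ref{lem:cone}, and use Corollary~\ref{cor:classify} to force a non-corner boundary apex, giving $2(m+n-4)$ per family. The only difference is cosmetic: the paper doubles the one-maximum count using the colour-inversion involution $h\mapsto -h$, whereas you count anti-cones directly and check disjointness by extremum type. This amounts to the same argument, since the dual clause of the corollary is itself proved via that inversion.
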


\begin{proof}
A degree-$3$ vertex has three strict local extrema
(Lemma~\ref{lem:degree-extrema}). By Lemma~\ref{lem:mindeg} at least one is a
maximum and at least one is a minimum, so its extrema form either two minima and
one maximum or one minimum and two maxima. The colour inversion $h \mapsto -h$ (swapping
colours $1$ and $2$ and fixing $0$) is an involutive automorphism of
$\OFG(\Mm)$ that exchanges maxima and minima; it therefore swaps the
two-minima-one-maximum and one-minimum-two-maxima families, and being an
involution restricts to a bijection between them. Therefore the number of
degree-$3$ vertices is twice the number with two minima and one maximum.

A vertex with a unique maximum $q$ is the cone at $q$ by Lemma~\ref{lem:cone}, and by
Corollary~\ref{cor:classify} it has exactly two minima precisely when $q$ is a
non-corner boundary vertex. The non-corner boundary vertices of $G_{m,n}$ number
$2(m+n-4)$, the $2m+2n-4$ boundary vertices less the $4$ corners, so the count of
two-minima-one-maximum vertices is $2(m+n-4)$, and the total is $4(m+n-4)$.
\end{proof}

For $m=2$ this is $4(n-2)$, recovering the degree-$3$ count
of~\cite{Christensen2025origami}.

\begin{theorem}[Degree-4 characterisation]\label{thm:deg4}
For $m,n \ge 2$, the degree-$4$ vertices of $\OFG(\Mm)$ are exactly the
height functions with two strict local minima and two strict local maxima;
in particular, none is a cone or an anti-cone.
\end{theorem}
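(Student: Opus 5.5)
By Lemma~\ref{lem:degree-extrema} (applicable since $mn\ge 4$), a degree-$4$ vertex is a height function $h$ with exactly four strict local extrema. By Lemma~\ref{lem:mindeg} it has at least one maximum and at least one minimum. So its extremum profile (maxima, minima) is one of $(1,3)$, $(2,2)$, $(3,1)$, and the task is to rule out $(1,3)$ and $(3,1)$. Conversely, any height function with two strict local minima and two strict local maxima has exactly four extrema, hence degree~$4$. This gives the reverse inclusion immediately.

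To exclude the profile $(1,3)$, I will argue as follows. If $h$ has a unique strict local maximum $q$, then Lemma~\ref{lem:cone} says $h$ is the cone at $q$, i.e.\ $h=h_q$, since the normalisation $h(1,1)=0$ fixes the additive constant. By Corollary~\ref{cor:classify} (valid as $m,n\ge 2$), the cone $h_q$ has degree $1+\kappa(q)$ with $\kappa(q)\in\{1,2,4\}$, so its degree lies in $\{2,3,5\}$ and is never $4$. Hence no degree-$4$ vertex has a unique maximum.

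The profile $(3,1)$ is symmetric. Either apply the dual half of Lemma~\ref{lem:cone}, so that $h=-h_p$ up to normalisation with degree $1+\kappa(p)$, or invoke the colour-inversion automorphism $h\mapsto -h$ from Corollary~\ref{cor:classify}. That automorphism preserves degree and swaps maxima with minima, so it reduces this case to the previous one.

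These two exclusions also give the final clause: a cone has a unique maximum and an anti-cone a unique minimum, so neither can have the $(2,2)$ profile. The only point needing care is that the cone degree set $\{2,3,5\}$ omits $4$; this rests on $\kappa$ never equalling $3$, which Corollary~\ref{cor:classify} supplies. No real obstacle is expected.
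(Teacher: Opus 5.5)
Your proposal is correct and follows the paper's proof essentially step for step. You count four extrema with at least one of each type, rule out a unique maximum or a unique minimum via Lemma~\ref{lem:cone} and the cone degree set $\{2,3,5\}$ from Corollary~\ref{cor:classify}, and note that the converse is immediate.
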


\begin{proof}
A degree-$4$ vertex has four extrema (Lemma~\ref{lem:degree-extrema}) and at
least one of each type (Lemma~\ref{lem:mindeg}). If it had a unique maximum it
would be a cone (Lemma~\ref{lem:cone}), of degree $2$, $3$, or $5$
(Corollary~\ref{cor:classify}) and never $4$; dually a unique minimum would make
it an anti-cone, equally never of degree~$4$. Hence it has at least two minima
and at least two maxima, which four extrema force to be exactly two of each---so
it is neither a cone nor an anti-cone. Conversely, two minima and two maxima are
four extrema, hence degree~$4$.
\end{proof}

Theorem~\ref{thm:deg4} locates the exact reach of the cone method. Writing the
extrema of a height function as a pair $(a,b)$ for $a$ minima and $b$ maxima,
a vertex is a cone or its dual precisely when $a=1$ or $b=1$. Degrees $2$ and
$3$ admit only the splits $(1,1)$ and $(2,1),(1,2)$, all cones or anti-cones;
degree~$4$ admits only $(2,2)$, the first balanced split, which is neither. Counting
these balanced-extrema vertices occupies the rest of this section, beginning with
a representation of every height function as an envelope of distance cones.

\begin{lemma}[Envelope Lemma]\label{lem:envelope}
Let $h$ be a height function with strict-local-minimum set $P$ and
strict-local-maximum set $Q$. Then for every vertex $v$,
\[
  h(v) = \min_{p\in P}\big(h(p)+\dgrid(p,v)\big)
       = \max_{q\in Q}\big(h(q)-\dgrid(q,v)\big).
\]
\end{lemma}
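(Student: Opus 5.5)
The plan is to adapt the two-sided argument of the Cone Lemma (Lemma~\ref{lem:cone}), this time allowing several extrema. I prove the minimum formula directly; the maximum formula then follows by applying it to $-h$. Negating $h$ swaps $P$ and $Q$ and preserves the height-function property, and the identity $-\min_{q}(-h(q)+\dgrid(q,v)) = \max_q(h(q)-\dgrid(q,v))$ turns one formula into the other. I also note that $P$ and $Q$ are nonempty by Lemma~\ref{lem:mindeg}. That lemma assumes $mn\ge3$; when $mn\le 2$ the set of minima is still nonempty, since the global minimum is a strict local minimum whenever the grid has an edge, and on a single cell the statement is trivial.

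The first step is the inequality $h(v)\le h(p)+\dgrid(p,v)$ for every $p\in P$. This is the 1-Lipschitz bound. Along a shortest $p$--$v$ path of length $\dgrid(p,v)$, each edge changes $h$ by exactly one, so $h(v)-h(p)\le \dgrid(p,v)$. Taking the minimum over $p$ gives $h(v)\le\min_{p\in P}(h(p)+\dgrid(p,v))$.

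The second step is the reverse inequality, obtained by descending steepest paths. Start at $v$. If $v$ is not a strict local minimum, then by definition some neighbour fails to have height $h(v)+1$. Since neighbouring heights differ by exactly one, that neighbour has height $h(v)-1$. Move to it and repeat. The heights strictly decrease and the grid is finite, so the walk terminates at a vertex with no lower neighbour, which is a strict local minimum $p^\ast\in P$. The walk has $h(v)-h(p^\ast)$ steps, and it is a walk from $v$ to $p^\ast$, so $\dgrid(p^\ast,v)\le h(v)-h(p^\ast)$. Rearranging gives $h(p^\ast)+\dgrid(p^\ast,v)\le h(v)$, and therefore the minimum over $P$ is at most $h(v)$. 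Combined with the first step, this yields equality.

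There is no real obstacle. The only points needing care are that "not a strict local minimum" really produces a lower neighbour, which uses the fact that heights differ by exactly $\pm1$ across edges, and the degenerate small grids where $P$ or $Q$ might be empty, which the nonemptiness remark above handles.
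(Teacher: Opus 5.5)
Your proposal is correct and follows essentially the same approach as the paper. The paper likewise reduces the maximum formula to the minimum formula via $-h$, gets the upper bound from a shortest path, and gets the reverse bound from a height-decreasing walk that must end at a strict local minimum.

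Your separate nonemptiness remark is harmless but unnecessary, since that walk already produces an element of $P$.
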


\begin{proof}
Applying the first equality to $-h$ gives the second, so it suffices to prove the
first. Fix a vertex $v$. For each $p\in P$, a shortest $p$--$v$ path has
$\dgrid(p,v)$ edges, across each of which $h$ changes by exactly one, so
$h(v)-h(p)\le\dgrid(p,v)$. As this holds for every $p$, we have
$h(v)\le\min_{p\in P}\big(h(p)+\dgrid(p,v)\big)$. For the reverse inequality,
construct a path from $v$ by repeatedly stepping to a neighbour of height one
less, which exists at every vertex that is not a strict local minimum. The
heights strictly decrease along this path, so no vertex recurs, and as the grid
is finite the path ends at a vertex with no lower neighbour, a strict local
minimum $p\in P$. Each step lowers $h$ by one, so the path has $h(v)-h(p)$ edges.
A path from $v$ to $p$ has at least $\dgrid(p,v)$ edges, so
$h(v)-h(p)\ge\dgrid(p,v)$, that is $h(v)\ge h(p)+\dgrid(p,v)$. The right-hand side
is one of the terms in the minimum, so
$h(v)\ge\min_{p\in P}\big(h(p)+\dgrid(p,v)\big)$, which with the upper bound proves
the first equality.
\end{proof}

Lemma~\ref{lem:cone} is the case $\lvert Q\rvert=1$. For a degree-$4$ vertex with two
minima $p_1, p_2$, the lemma writes $h$ as the lower envelope of the two cones
$h(p_k)+\dgrid(p_k,\cdot)$; the next lemma builds such envelopes from any pair
of apexes.

\begin{lemma}[Parity Lemma]\label{lem:parity}
For distinct cells $p_1,p_2$ with $D=\dgrid(p_1,p_2)$ and any integer
$\delta\equiv D\pmod 2$, the function
$v\mapsto\min\!\big(\dgrid(p_1,v),\,\delta+\dgrid(p_2,v)\big)$ is a height
function.
\end{lemma}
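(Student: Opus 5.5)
Write $f_1(v)=\dgrid(p_1,v)$, $f_2(v)=\delta+\dgrid(p_2,v)$ and $f=\min(f_1,f_2)$. A height function here means an integer function whose values differ by exactly one across every edge; the normalisation $h(1,1)=0$ is a separate bookkeeping matter, fixed by subtracting the constant $f(1,1)$. So the whole task is to show $|f(u)-f(v)|=1$ for every grid edge $uv$. The plan has two ingredients.

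\textbf{Step 1: each $f_k$ is a height function.} Along an edge $uv$, $\dgrid(p,\cdot)$ changes by exactly one. The grid is bipartite, and $\dgrid(p,v)\equiv |p|+|v| \pmod 2$, where $|(i,j)|=i+j$. So a single step changes the parity of $\dgrid(p,\cdot)$, and since the change is at most one in absolute value, it is exactly $\pm1$. Hence $f_1$ and $f_2$ are each height functions.

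\textbf{Step 2: $f_1$ and $f_2$ have the same parity everywhere.} We have $f_1(v)\equiv |p_1|+|v|$ and $f_2(v)\equiv \delta+|p_2|+|v| \pmod 2$. The hypothesis $\delta\equiv D\equiv |p_1|+|p_2| \pmod 2$ gives $\delta+|p_2|\equiv |p_1|$, so $f_1(v)\equiv f_2(v)$ at every $v$. This is the only place the hypothesis $\delta\equiv D$ is used, and it is exactly what is needed.

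\textbf{Step 3: the minimum of two height functions with equal parity is a height function.} Fix an edge $uv$.
\begin{itemize}
\item The values $f(u)$ and $f(v)$ are each one of $f_1(u),f_2(u)$ and $f_1(v),f_2(v)$. All four numbers at $u$ share one parity, and all those at $v$ share the opposite parity, so $f(u)-f(v)$ is odd and in particular nonzero.
\item The minimum of $1$-Lipschitz functions is $1$-Lipschitz. Concretely, $f(u)\le f_k(u)\le f_k(v)+1$ for each $k$; taking the $k$ attaining $f(v)$ gives $f(u)\le f(v)+1$, and symmetrically $f(v)\le f(u)+1$.
\end{itemize}
An odd integer of absolute value at most one is $\pm1$, so $|f(u)-f(v)|=1$ and $f$ is a height function.

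The main point to get right is Step 2. Without the parity hypothesis the two cones could disagree in parity, and then $f$ could be constant across an edge where the minimising cone switches from $f_1$ to $f_2$. The rest is routine.
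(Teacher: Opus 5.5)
Your proof is correct and matches the paper's argument: the minimum of the two $1$-Lipschitz cones is $1$-Lipschitz, and the hypothesis $\delta\equiv D\pmod 2$ gives both cones the parity $(i+j)+(r_1+s_1)$ at every cell. The minimum therefore changes parity across every edge, and an odd change of absolute value at most one is exactly $\pm1$. Your Step 1 proves slightly more than is needed, namely that each cone changes by exactly one; the argument only uses the Lipschitz bound and the parity identity.
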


\begin{proof}
Each of $\dgrid(p_1,\cdot)$ and $\delta+\dgrid(p_2,\cdot)$ is $1$-Lipschitz, and
a minimum of $1$-Lipschitz functions is again $1$-Lipschitz, so $h$ changes by at
most one across each edge. It remains to show that $h$ changes by at least one.
Write $p_k=(r_k,s_k)$. Because $\lvert a-b\rvert\equiv a+b\pmod2$, every cell
$v=(i,j)$ satisfies $\dgrid(p_k,v)\equiv (i+j)+(r_k+s_k)$, and the same identity
applied to $D=\dgrid(p_1,p_2)$ gives $D\equiv (r_1+s_1)+(r_2+s_2)$. The first
argument of the minimum therefore has parity
$\dgrid(p_1,v)\equiv (i+j)+(r_1+s_1)$. Since $\delta\equiv D$, the second has
$\delta+\dgrid(p_2,v)\equiv (r_1+s_1)+(r_2+s_2)+(i+j)+(r_2+s_2)\equiv
(i+j)+(r_1+s_1)$ as well, so both arguments share this parity at every cell and
their minimum $h$ inherits it. Across each edge $i+j$ changes by one, so the
values of $h$ at the two ends have opposite parity and are therefore unequal. An
integer $1$-Lipschitz function whose values differ across every edge differs
there by exactly one, so $h$ is a height function.
\end{proof}

\begin{proposition}[Cone-pair bijection]\label{prop:conepair}
Given two distinct cells $p_1,p_2$ and an integer
$\delta\equiv\dgrid(p_1,p_2)\pmod2$, let
\[
  h=\min\!\big(\dgrid(p_1,\cdot),\,\delta+\dgrid(p_2,\cdot)\big).
\]
The degree-$4$ vertices are in bijection with those pairs $(\{p_1,p_2\},\delta)$
for which $h$ has minimum set $\{p_1,p_2\}$ and exactly two maxima, the vertex
being recovered by normalising $h(1,1)=0$. Writing $N(p_1,p_2)$ for the number of
admissible $\delta$, the degree-$4$ count is $\sum_{\{p_1,p_2\}}N(p_1,p_2)$.
\end{proposition}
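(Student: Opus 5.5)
We construct two maps, one in each direction, and show they are mutually inverse. The forward map sends an admissible triple to a degree-$4$ vertex. Given an admissible $(\{p_1,p_2\},\delta)$, Lemma~\ref{lem:parity} makes $h$ a height function. Subtracting the constant $h(1,1)$ preserves the height-function property and the sets of extrema, and it puts the function in the normalised form of Section~\ref{sec:method}, so it is a vertex of $\OFG(\Mm)$. By admissibility this vertex has minimum set $\{p_1,p_2\}$ and exactly two maxima. Theorem~\ref{thm:deg4} then makes it a degree-$4$ vertex.

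We must also check that the forward map is well defined on unordered pairs. Swapping $p_1$ and $p_2$ replaces $\delta$ by $-\delta$, since
\[
\min\big(\dgrid(p_2,\cdot),\,-\delta+\dgrid(p_1,\cdot)\big)=h-\delta .
\]
This differs from $h$ only by a constant, so it yields the same normalised vertex. The parity condition is preserved, because $-\delta\equiv\delta$. So we fix the convention that the triple is read with the ordered representative and $\delta$ measured as $h(p_2)-h(p_1)$. Concretely, we identify $\delta$ with the height difference $h(p_2)-h(p_1)$, which needs to be checked; the count $N(p_1,p_2)$ is unaffected because $\delta\mapsto-\delta$ is a bijection on admissible values.

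The inverse map sends a degree-$4$ vertex $g$ to a triple. By Theorem~\ref{thm:deg4}, $g$ has exactly two minima $p_1,p_2$ and two maxima. Set $\delta=g(p_2)-g(p_1)$. Since $g$ changes by $\pm1$ along each edge of a shortest path, $\delta\equiv\dgrid(p_1,p_2)\pmod 2$. The Envelope Lemma~\ref{lem:envelope} gives $g=g(p_1)+h$, so $h$ is a translate of $g$. Hence $h$ has minimum set $\{p_1,p_2\}$ and two maxima, which makes the triple admissible.

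It remains to show the two maps are inverse, and this is the main obstacle. Composing inverse-then-forward returns $g$, since normalisation recovers $g$ from its translate $h$. For forward-then-inverse, we must show that the minima of the recovered vertex are exactly $p_1,p_2$, which admissibility guarantees. We must also show that the recovered offset equals $\delta$, i.e.\ that $h(p_2)-h(p_1)=\delta$. Admissibility makes $p_1$ a local minimum of $h$, so $h(p_1)=0$: otherwise $h(p_1)=\delta+\dgrid(p_1,p_2)<0$ would force $p_2$ to be lower than $p_1$ and the minimum set would not contain $p_1$ as claimed, and this point is to be checked carefully. Likewise $h(p_2)=\delta$, since otherwise $h(p_2)=\dgrid(p_1,p_2)$ and $p_2$ would be dominated by the first cone. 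So distinct admissible triples give distinct vertices. Summing over unordered pairs then yields the count $\sum N(p_1,p_2)$.
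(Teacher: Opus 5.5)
Your overall route is the paper's. The Parity Lemma makes the envelope of an admissible triple a height function, and the Envelope Lemma turns a degree-$4$ vertex $g$ into the triple with $\delta=g(p_2)-g(p_1)$. The two maps then invert each other once one knows that $h(p_1)=0$ and $h(p_2)=\delta$ for every admissible triple. Your bookkeeping for unordered pairs is a useful clarification that the paper leaves implicit: swapping the apexes replaces $h$ by $h-\delta$ and $\delta$ by $-\delta$.

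The one step that does not go through as written is the one you flagged. Write $D=\dgrid(p_1,p_2)$. You argue that if $h(p_1)=\delta+D<0$, then $p_2$ lies lower than $p_1$, so $p_1$ cannot be a minimum. That inference is invalid. Admissible triples with $-D<\delta<0$ have $h(p_2)=\delta<0=h(p_1)$, yet both apexes are strict local minima, so comparing global heights says nothing about local minimality.

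The missing observation is local, and it is exactly the paper's active-apex argument. Suppose $\delta+D<0$. Then the cone $\delta+\dgrid(p_2,\cdot)$ is the one attaining $h$ at $p_1$. At the neighbour $w$ of $p_1$ one step nearer $p_2$ this gives $h(w)\le\delta+D-1<h(p_1)$, so $p_1$ is not a local minimum. Symmetrically, if $\delta>D$, then $\dgrid(p_1,\cdot)$ attains $h(p_2)=D$, and the neighbour of $p_2$ one step toward $p_1$ has $h\le D-1$. That is what your phrase ``dominated by the first cone'' needs to mean.

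The paper packages this as follows: the minima of $h$ are exactly the apexes whose own cone is strictly smaller there. Admissibility therefore forces $-D<\delta<D$, and $h(p_1)=0$ and $h(p_2)=\delta$ follow at once. With that line in place of your current justification, your proof is complete.
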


\begin{proof}
A degree-$4$ vertex $g$ has exactly two minima $p_1,p_2$, so
Lemma~\ref{lem:envelope} writes it as the lower envelope
$g(v)=\min\big(g(p_1)+\dgrid(p_1,v),\,g(p_2)+\dgrid(p_2,v)\big)$. Subtracting the
constant $g(p_1)$ gives the stated $h$ with $\delta=g(p_2)-g(p_1)$, whose parity
matches that of $\dgrid(p_1,p_2)$ because $g$ is a height function. Since $h$ has
the same local extrema as $g$, it has minimum set $\{p_1,p_2\}$ and two maxima, so
the pair is admissible. Conversely, Lemma~\ref{lem:parity} makes the function of an
admissible pair a height function, and its prescribed two minima and two maxima
give it degree~$4$.

To see that the two maps invert each other, identify the minima of $h$, the lower
envelope of the cones with apexes $p_1$ and $p_2$. Call an apex \emph{active} when
its cone is the strictly smaller of the two at that apex. An active apex $p_k$ is
the global minimum of its own cone, and the matched parity keeps the other cone
strictly above $h(p_k)$ at every neighbour, so $p_k$ is a strict local minimum. No
other cell is a local minimum: a cell other than $p_1,p_2$ has, in some cone
attaining the minimum there, a neighbour one step nearer that cone's apex, where
$h$ is strictly smaller. The minima of $h$ are therefore exactly the active apexes,
so the minimum set equals $\{p_1,p_2\}$ precisely when both apexes are active, and
the two maps invert each other. Figure~\ref{fig:conepair} shows an instance on
$M_{3,3}$.
\end{proof}

\begin{figure}[ht]
\centering
\begin{tikzpicture}[x=1.1cm, y=-1.1cm, font=\small]
  \draw[gray!50, step=1] (1,1) grid (3,3);
  \foreach \i/\j/\h in {1/1/0, 1/2/1, 1/3/2, 2/1/1, 2/2/2, 2/3/3,
                          3/1/2, 3/2/1, 3/3/2}
    \node[circle, draw=black, fill=white, minimum size=7mm, inner sep=0pt]
      at (\j,\i) {$\h$};
  \foreach \i/\j/\h in {1/1/0, 3/2/1}
    \node[circle, draw=black, fill=gray!30, minimum size=7mm, inner sep=0pt]
      at (\j,\i) {$\h$};
  \foreach \i/\j/\h in {2/3/3, 3/1/2}
    \node[rectangle, draw=black, very thick, fill=white,
          minimum size=7mm, inner sep=0pt] at (\j,\i) {$\h$};
  \node[font=\footnotesize, anchor=east] at (0.6,1) {$p_1$};
  \node[font=\footnotesize, anchor=north] at (2,3.6) {$p_2$};
\end{tikzpicture}
\caption{A degree-$4$ vertex of $\OFG(M_{3,3})$ realised as the cone-pair
envelope $h(i,j) = \min\bigl(\dgrid(p_1,(i,j)),\, 1+\dgrid(p_2,(i,j))\bigr)$
with apexes $p_1=(1,1)$ and $p_2=(3,2)$ and $\delta=1$. The two strict local
minima are shaded; the two strict local maxima are boxed.}
\label{fig:conepair}
\end{figure}

\begin{lemma}[Boundary Lemma]\label{lem:boundary}
If $p_1$ or $p_2$ is an interior vertex then $N(p_1,p_2)=0$. Equivalently, every
degree-$4$ vertex has all four extrema on the boundary of the grid.
\end{lemma}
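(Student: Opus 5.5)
By the colour inversion $h\mapsto -h$ (Corollary~\ref{cor:classify}), which exchanges minima and maxima and preserves degree, the two phrasings of the statement are equivalent. It therefore suffices to show that a degree-$4$ height function $h$ has no interior strict local minimum. By Theorem~\ref{thm:deg4}, $h$ has exactly two maxima $q_1,q_2$ and two minima. Rather than working with the minimum-cone envelope that defines $N(p_1,p_2)$, I will use the dual form of Lemma~\ref{lem:envelope}:
\[
h(v)=\max\bigl(h(q_1)-\dgrid(q_1,v),\,h(q_2)-\dgrid(q_2,v)\bigr),
\]
an upper envelope of two downward cones. Suppose $v$ is an interior strict local minimum. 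I will derive a contradiction by splitting on how many of the two cones attain the maximum at $v$.

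\emph{Case 1: only cone $k$ attains at $v$.} The Parity Lemma argument shows the two cones have equal parity at every cell. Hence the other cone is at most $h(v)-2$ at $v$, and at most $h(v)-1$ at every neighbour. Since every neighbour has $h=h(v)+1$, that value must come from cone $k$. So $v$ is a strict local minimum of $h(q_k)-\dgrid(q_k,\cdot)$, that is, a strict local maximum of $\dgrid(q_k,\cdot)$. By the separable analysis in Corollary~\ref{cor:classify}, such maxima lie only at grid corners (each coordinate sits at an endpoint of its path). This contradicts $v$ being interior.

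\emph{Case 2: both cones attain at $v$.} Each of the four neighbours must then be strictly farther from $q_1$ or from $q_2$. The directions toward $q_k$ are at most one per axis. Covering all four directions therefore forces $q_1$ and $q_2$ to lie strictly on opposite sides of $v$ in both coordinates. Without loss of generality (by grid symmetry) $q_1=(a_1,b_1)$ and $q_2=(a_2,b_2)$ with $a_1<a<a_2$ and $b_1<b<b_2$, where $v=(a,b)$. Guided by the $3\times 3$ example with maxima $(1,1),(3,3)$, I would then exhibit a second minimum away from $v$ in each of the two off-diagonal quadrants $R_1=\{i\le a,\ j\ge b\}$ and $R_2=\{i\ge a,\ j\le b\}$. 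The natural candidates are the corners $(1,n)$ and $(m,1)$. At $(1,n)$, stepping down or left moves toward both $q_1$ and $q_2$ whenever the relevant coordinates are not on row $1$ or column $n$, so both cones strictly increase there and the corner is a strict local minimum. Together with $v$ this gives three minima, contradicting Theorem~\ref{thm:deg4}.

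The main obstacle is the degenerate subcases of Case 2 where some $q_k$ lies on a boundary row or column, so that a corner need not be a minimum. There I would replace the corner argument with a more robust one. Take a cell minimising $h$ over $R_1\setminus\{v\}$ (respectively $R_2\setminus\{v\}$). Then show, using the strict separation $a_1<a<a_2$, $b_1<b<b_2$ and the fact that $h$ increases in both cones as one moves from $v$ into each off-diagonal quadrant, that this minimiser cannot have a lower neighbour outside the quadrant. It is therefore a strict local minimum of $h$ distinct from $v$. Producing one such minimum in each of $R_1$ and $R_2$ yields at least three minima, completing the contradiction.
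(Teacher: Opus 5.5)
Your route differs from the paper's. The paper stays with the minimum pair: it takes $h=\min(\mathrm{cone}_1,\mathrm{cone}_2)$ with an interior apex $p_I$ and exhibits three maxima. It splits into the case of two interior apexes (four corner maxima) and the case of $p_I$ with a boundary apex, the latter further split into corner and edge apex. You instead fix the two maxima and write $h=\max(c_1,c_2)$ with $c_k=h(q_k)-\dgrid(q_k,\cdot)$. You then show that an interior minimum $v=(a,b)$ forces a third minimum.

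The parts that work:
\begin{itemize}
\item Case~1 is correct.
\item The reduction in Case~2 to $a_1<a<a_2$, $b_1<b<b_2$ is correct. The condition there is that each neighbour be strictly \emph{nearer} to $q_1$ or $q_2$, not farther; your next sentence already uses it that way.
\item The corner argument when $a_1\ge2$ and $b_2\le n-1$ (and dually at $(m,1)$) is fine.
\end{itemize}

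The gap is in the degenerate subcase, which is where the real work lies. The reason you give is false. Stepping from $v$ into $R_1$ raises one cone and lowers the other: up raises $c_1$ and lowers $c_2$, and right does the opposite. So $h$ does not increase in both cones. You also do not address that the minimiser of $h$ over $R_1\setminus\{v\}$ could be a neighbour of $v$, in which case $v$ itself is a lower neighbour.

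Both points can be repaired.
\begin{itemize}
\item The cell $(a-1,b+1)$ lies in $R_1$ and has $c_1=c_2=h(v)$, since the up-step and right-step cancel in each cone. Hence $\min_{R_1\setminus\{v\}}h\le h(v)<h(v)+1$, which rules out neighbours of $v$ as the minimiser.
\item On row $a$ to the right of $v$, $c_2-c_1=2\min(j-b,\,b_2-b)>0$, and the down-step raises $c_2$.
\item On column $b$ above $v$, $c_1-c_2=2\min(a-i,\,a-a_1)>0$, and the left-step raises $c_1$.
\end{itemize}
So no neighbour outside $R_1$ is lower, and the minimiser is a strict local minimum. The same argument works in $R_2$ using $(a+1,b-1)$. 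Since $R_1\cap R_2=\{v\}$, this yields three minima in every configuration, and the corner argument becomes unnecessary.

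Once repaired, your proof is uniform: it needs no corner/edge casework and proves the ``equivalently'' form directly from the dual envelope. The paper's proof instead works inside the cone-pair parametrisation by minima, which its subsequent family counts reuse.
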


\begin{proof}
If both apexes are interior, then for each $k$ the cone $\dgrid(p_k,\cdot)$ peaks
at all four corners, since an interior apex has every corner as the unique
farthest cell of its quadrant. At each corner the active cone makes that corner a
strict local maximum of $h$, so $h$ has four maxima, more than a degree-$4$ vertex
permits.

Now let $p_I=(r,s)$ be interior and let $p_B$ lie on the boundary. Reflecting or
transposing the grid if necessary, take $p_B$ at the top-left corner or on the
top side. We exhibit three strict local maxima of $h$.

\emph{Case $p_B=(1,1)$ a corner.} Here $\mathrm{cone}_B=\delta+(i-1)+(j-1)$ rises
by one with each step down or to the right. The opposite corner $(m,n)$ is a
strict local maximum of $h$, since both cones rise toward it and hence fall off at
each of its two neighbours. Consider the last column. A step inward from $(i,n)$
to $(i,n-1)$ moves nearer both $p_B$ and the column $s\le n-1$ of $p_I$, lowering
both cones and so lowering $h$; hence a strict local maximum of $h$ along the last
column is a strict local maximum of $h$. Along that column $h$ has a strict local
minimum at the interior row $r$, where $\mathrm{cone}_I$ is least and hence
active, and rises from row $r$ down to $(m,n)$; rising likewise from row $r$
upward, it attains a strict local maximum in some row above $r$, a cell distinct
from $(m,n)$. The last row supplies a third maximum by the same argument with the
axes exchanged. Hence $h$ has three maxima.

\emph{Case $p_B=(1,c)$ an edge cell}, on the top side with $2\le c\le n-1$, so
$\mathrm{cone}_B=\delta+(i-1)+|j-c|$. The two bottom corners are strict local
maxima of $h$. At $(m,1)$ each cone exceeds its values at the neighbours
$(m-1,1)$ and $(m,2)$ by one, the corner being the farthest cell of $p_I$'s
quadrant and, as $c\ge2$, farther from $p_B$ than either neighbour. So $h$ falls
off at both neighbours, and the corner $(m,n)$ is symmetric using $c\le n-1$. Were
these the only two maxima, the dual of Lemma~\ref{lem:envelope} would write $h$ as
the upper envelope of the anti-cones $h(m,1)-\dgrid((m,1),\cdot)$ and
$h(m,n)-\dgrid((m,n),\cdot)$, both of which rise by one with each step down a
column. Then $h$ would rise down every column and take all its minima on the top
row, contradicting the interior minimum $p_I$. Hence $h$ has a third maximum.

In either case $h$ has three or more maxima, so the split $(2,2)$ cannot occur.
\end{proof}

\begin{theorem}[Degree-4 count]\label{thm:deg4count}
For $\min(m,n) \ge 3$, the number of degree-$4$ vertices of $\OFG(\Mm)$ is
\[
  2m^2 + 2n^2 + 6mn - 10(m+n) - 4 .
\]
\end{theorem}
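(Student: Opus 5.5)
By the Boundary Lemma (Lemma~\ref{lem:boundary}), $N(p_1,p_2)=0$ unless both apexes lie on the boundary cycle of $G_{m,n}$. The count in Proposition~\ref{prop:conepair} therefore reduces to a finite sum over unordered pairs of boundary cells. My plan is to compute $N(p_1,p_2)$ for each such pair by a local analysis of the envelope, and then sum the results.

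\textbf{Step 1: admissible offsets.} First I would pin down which $\delta$ make both apexes active. Write $D=\dgrid(p_1,p_2)$. The apex $p_1$ is active iff $0<\delta+D$, and $p_2$ is active iff $\delta<D$. With the parity constraint this leaves exactly the $D-1$ values $\delta\in\{-D+2,-D+4,\dots,D-2\}$. Hence $N(p_1,p_2)$ counts those $\delta$ in this range for which $h$ has exactly two strict local maxima.

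\textbf{Step 2: classifying the maxima.} I would locate the maxima of the envelope by the argument used in Lemma~\ref{lem:boundary}. A cell $v$ is a strict local maximum of $h$ iff every neighbour lies strictly lower. Two kinds of cells can satisfy this.
\begin{itemize}
\item \emph{Far corners.} These are corners lying in the region where a single cone is active and which are farthest from that cone's apex in their quadrant.
\item \emph{Ridge cells.} These are cells on the bisector $\dgrid(p_1,v)=\delta+\dgrid(p_2,v)$ where the two cones meet. At such a cell, each neighbour moves nearer one apex or the other.
\end{itemize}
Because both apexes are on the boundary, the bisector is a monotone lattice staircase running between two boundary points. I would show that a ridge cell is a strict maximum only where the staircase meets the boundary at a cell with no outward neighbour in the rising direction. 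As a result, maxima occur only at corners or at the (at most two) endpoints of the ridge on the boundary.

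\textbf{Step 3: case split and summation.} I would split the boundary pairs into the following configurations, up to the dihedral symmetries of the grid:
\begin{itemize}
\item two corners;
\item a corner and a non-corner edge cell, on a common side or not;
\item two edge cells on the same side;
\item two edge cells on adjacent sides;
\item two edge cells on opposite sides.
\end{itemize}
For each configuration, I would count corner maxima and ridge-endpoint maxima as functions of the positions and of $\delta$, and determine the $\delta$-window in which the total equals two. I expect, for example, that two cells on opposite sides already force two far corners, so admissibility requires the ridge to end at corners or in a non-maximal way. I also expect that two cells on one side admit a window of $\delta$ whose length depends only on their separation. Summing these $N$ values over positions gives sums of linear functions over $O(m+n)^2$ boundary pairs, which should produce the quadratic $2m^2+2n^2+6mn-10(m+n)-4$. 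The hypothesis $\min(m,n)\ge3$ is what makes every side contain a non-corner cell and keeps the case list uniform. As sanity checks, I would verify the value $26$ at $m=n=3$ and the transposition symmetry in $m,n$ against enumeration.

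\textbf{Main obstacle.} The hardest part will be Step 2 for pairs on adjacent or opposite sides. There the bisector can hit the boundary near a corner, and I must decide exactly when that boundary ridge cell, or the neighbouring corner, is a strict maximum. Off-by-one issues at the endpoints of the $\delta$-window will decide the linear terms of the polynomial. I would first handle these cases by computing $h$ explicitly along the two boundary sides that the ridge meets, reducing to a one-dimensional question about minima of two shifted $|\cdot|$ functions on a path, exactly as in the last-column argument of Lemma~\ref{lem:boundary}.
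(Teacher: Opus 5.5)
Your reduction to boundary pairs and the offset window $\delta\in\{-D+2,-D+4,\dots,D-2\}$ in Step~1 are correct. The key claim of Step~2 is false, however. You assert that a ridge cell can be a strict maximum only where the bisector meets the boundary, so that every maximum is a corner or one of the at most two boundary ends of the ridge. This fails whenever the apexes differ in both coordinates.

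\emph{Why it fails.} The $\ell_1$ bisector then contains a diagonal arm: cells on one antidiagonal or main diagonal, consecutive cells only diagonally adjacent. Take any ridge cell whose row lies strictly between the apexes' rows and whose column lies strictly between their columns. Each of its four grid neighbours moves nearer one apex and farther from the other, so it leaves the ridge and is exactly one lower. The property you attribute to ridge cells in Step~2 (each neighbour moves nearer one apex or the other) is precisely the condition for a ridge cell to be a strict maximum, and it holds at every such diagonal cell. These maxima are typically interior, even though both apexes lie on the boundary.

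\emph{Smallest instance.} On $G_{3,3}$ take $p_1=(1,1)$, $p_2=(3,3)$, $\delta=0$. Then $h=\min(s-2,\,6-s)$ with $s=i+j$, the ridge is $\{(1,3),(2,2),(3,1)\}$, and $(2,2)$ is a third strict maximum. Your rule sees only the two corners and accepts this offset, which is in fact a degree-$5$ vertex. Each opposite-corner pair therefore gets $N=3$ instead of $2$; the paper shows the family $CC\,|\,CC$ is empty. The opposite-corner pairs alone are thus overcounted by two on $G_{3,3}$.

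\emph{The error does not stay small.} Take edge apexes on the top and bottom sides at column gap $g=|c_1-c_2|\ge2$. At every offset where both horizontal branches of the ridge are present, the $g-1$ antidiagonal cells between columns $c_1$ and $c_2$ are extra maxima. Your rule still reports two maxima and accepts all $m-g$ such offsets per pair (when $g<m$). Summed over pairs this is a spurious contribution of cubic order, so the method as stated cannot produce the quadratic.

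The Boundary Lemma does not rescue the claim. It says a genuine degree-$4$ vertex has no interior extrema, not that an envelope of two boundary cones has none. Detecting those interior ridge maxima is exactly how offsets get rejected.

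What is missing is the exact local criterion the paper uses in its family analysis, later isolated as the Ridge Lemma. Off the ridge, a maximum must be a grid corner far from the active apex; on the ridge, a cell is a maximum iff no neighbour moves away from both apexes. The maxima are then the rising boundary ends of the ridge's straight arms, every non-junction cell of its diagonal arm, and qualifying corners. Demanding exactly two of these is what fixes the windows:
\begin{itemize}
\item for opposite corners, only the peak antidiagonals $s^\ast\in\{3,m+n-1\}$ survive;
\item for a corner paired with an edge cell on a non-incident side, the diagonal arm must have length $\ell=2$;
\item for edge cells on opposite sides, only column gaps $|c_1-c_2|\le1$ survive, contributing $m-2$ and $m-3$ per pair to $EE\,|\,EE$.
\end{itemize}

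Your Step~3 expectations go astray for the same reason. Opposite-side pairs do not force two far corners: for $c_1=c_2$ the maxima are the two side-edge ends of a horizontal ridge row. Same-side edge pairs contribute nothing to degree~$4$: whenever an offset is admissible, the two bottom corners together with the peak between the two vees already give three maxima. Finally, even with Step~2 corrected, the four family counts in the paper's table still have to be derived and summed; the proposal does not yet compute any of them.
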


\begin{proof}
By Lemma~\ref{lem:boundary} the sum $\sum_{\{p_1,p_2\}}N(p_1,p_2)$ of
Proposition~\ref{prop:conepair} runs over pairs of boundary cells, each a corner
($C$) or a non-corner edge vertex ($E$).
Sorting by the kinds of the two minima and the two resulting maxima gives nine
combinations of kinds. A pair of corner minima forces a pair of edge maxima, as
the $CC\,|\,EE$ block below shows, and dually a pair of corner maxima forces a
pair of edge minima. This empties $CC\,|\,CC$, $CC\,|\,CE$, and $CE\,|\,CC$,
leaving six families, interchanged in pairs by the colour inversion $h\mapsto-h$.
Four of them carry distinct counts, with the dual of each in the last column:
\begin{center}
\begin{tabular}{cclc}
\toprule
minima & maxima & count & dual \\
\midrule
$CC$ & $EE$ & $4$ & $EE\,|\,CC$ \\
$CE$ & $EE$ & $8(m+n-6)$ & $EE\,|\,CE$ \\
$EE$ & $EE$ & $2(m-2)(n-2)+4(m-3)(n-3)$ & self-dual \\
$CE$ & $CE$ & $2(m-2)(m-3)+2(n-2)(n-3)+16$ & self-dual \\
\bottomrule
\end{tabular}
\end{center}
\noindent Summing all six families gives $2m^2+2n^2+6mn-10(m+n)-4$, the first two
counts taken twice for their duals. It remains to establish the four tabulated
counts.

Each family below uses the same starting fact: every strict local maximum of
$h=\min(\mathrm{cone}_1,\mathrm{cone}_2)$ lies on the ridge
$\{\mathrm{cone}_1=\mathrm{cone}_2\}$ or at a grid corner. Indeed a non-corner
cell that exceeds all its neighbours must have both cones active there, since a
single distance cone attains a local maximum only at a grid corner. Both cones
active means $\mathrm{cone}_1=\mathrm{cone}_2=h$, that is, the cell lies on the
ridge. Grid corners can be maxima off the ridge, with both cones falling off in
both grid directions. Such corner maxima are flagged in each family below.

\medskip
\noindent\emph{Corner family $CC\,|\,EE$.}\enspace
Let $p_1,p_2$ be two corners. Suppose they are adjacent, sharing a side. Place
them at $p_1=(1,1)$ and $p_2=(1,n)$. Then $\dgrid(p_1,v)$ and $\dgrid(p_2,v)$ both
contain $i-1$ as a summand, so $h(v)-(i-1)$ depends only on $j$. The function $h$
therefore strictly increases down each column and attains its maximum on a single
row. That one maximum makes $h$ a cone, hence not a degree-$4$ vertex.

Suppose instead they are opposite, $p_1=(1,1)$ and $p_2=(m,n)$. Then both
$\dgrid(p_k,v)$ depend only on $s=i+j$, so $h$ is constant on each antidiagonal
$A_s=\{i+j=s\}$ and is a tent in $s$ with valleys at the two corner antidiagonals
$A_2$ and $A_{m+n}$. The maxima of $h$ form the peak antidiagonal $A_{s^\ast}$, so
$h$ is a degree-$4$ vertex precisely when $\lvert A_{s^\ast}\rvert=2$. This holds
at exactly $s^\ast=3$ and $s^\ast=m+n-1$, two admissible offsets, each giving a
pair of edge maxima. Each opposite-corner pair therefore contributes $N=2$, and
the two opposite pairs give $CC\,|\,EE=4$.

\medskip
\noindent\emph{Linear family $CE\,|\,EE$.}\enspace
Normalise the corner minimum to $p_1=(1,1)$ and place the edge minimum on a side
non-incident to $(1,1)$, say the bottom side $p_2=(m,c)$ with $2\le c\le n-1$.
The ridge $\{\mathrm{cone}_1=\mathrm{cone}_2\}$ is L-shaped, consisting of a
horizontal segment along one row, on which $h$ rises to its right end at column
$n$, and a constant-height antidiagonal segment of length $\ell$ running to the
left or bottom boundary. Off-ridge neighbours of ridge cells are one lower than
the ridge value, since the active cone there is strictly smaller and falls by
one. Each antidiagonal cell other than the junction with the horizontal segment
is therefore a strict local maximum, having all four grid-neighbours off the
ridge. The right end of the horizontal at column $n$ is also a strict local
maximum. Its horizontal neighbour is one lower because $h$ rises along the
horizontal, and its two vertical neighbours are off-ridge. The junction is not a
maximum, having a higher horizontal neighbour. The ridge maxima of $h$ are
therefore the right end of the horizontal segment together with these $\ell-1$
antidiagonal cells, $\ell$ in all. A degree-$4$ vertex in this family has exactly
two maxima, both edge cells. Combined with the ridge count $\ell$, this forces
$\ell=2$ and excludes any off-ridge corner maximum.

As $\delta$ varies, the L slides. For $c=2$ the antidiagonal has length~$2$ for
every admissible $\delta$, exiting on the left side, while the horizontal row
sweeps from row $1$ to row $m-1$. The configuration is $CE\,|\,EE$ exactly when
both arm ends are edge cells, which excludes the two extreme rows and leaves the
$m-3$ middle rows. For $3\le c\le n-1$ the antidiagonal exits on the bottom side
instead, and has length~$2$ only at the largest admissible $\delta$, contributing
one configuration per such $c$. Summing, $(m-3)+(n-3)=m+n-6$ vertices arise per
corner and non-incident side. Four corners with two non-incident sides each give
$CE\,|\,EE=8(m+n-6)$.

\medskip
\noindent\emph{Mixed family $CE\,|\,CE$.}\enspace
When the edge minimum lies on a side incident to the corner minimum, place them
at $p_1=(1,1)$ and $p_2=(1,c)$ with $3\le c\le n-1$. The shared first coordinate
gives $h(i,j)=(i-1)+\tau(j)$, where $\tau(j)=\min(j-1,\,\delta+\lvert j-c\rvert)$
is the minimum of a line of slope $+1$ and a vee bottoming at $j=c$. The line
and vee meet at $j=k:=(\delta+c+1)/2$, an integer because
$\delta\equiv c-1\pmod{2}$. The function $\tau$ rises on $\{1,\dots,k\}$, falls
on $\{k,\dots,c\}$, then rises again on $\{c,\dots,n\}$, so its local maxima sit
at $j=k$ and $j=n$. Since $h$ rises in $i$ to row $m$, the maxima of $h$ are
$(m,k)$ and $(m,n)$. The cell $(m,k)$ is a non-corner edge cell because
$2\le k\le c-1$, and $(m,n)$ is a corner, giving the $CE$ maxima of family
$CE\,|\,CE$. As $\delta$ runs through its admissible parity class, $k$ takes
each value in $\{2,\dots,c-1\}$, contributing $c-2$ functions for fixed $c$.
Summing over the top side, $\sum_{c=3}^{n-1}(c-2)=\binom{n-2}{2}$. The same argument
on the left side with $m$ in place of $n$ contributes $\binom{m-2}{2}$.

When the edge minimum lies on a non-incident side, say $p_2=(m,c)$ on the
bottom, the corner-edge ridge is again a horizontal row plus a constant
antidiagonal of length $\ell$, as in the Linear family. A degree-$4$ vertex
requires $\ell=2$, with two maxima: the horizontal's right end at column $n$ and
the antidiagonal's exit on a side. The family $CE\,|\,CE$ requires exactly one
of these to be a corner. The right end at $(1,n)$ is a corner when the horizontal
sits in row $1$, namely at the smallest admissible $\delta$. By the Linear-family
analysis, $\ell=2$ at row~$1$ requires $c=2$. The antidiagonal then runs from
$(1,2)$ to its exit $(2,1)$ on the left side. The antidiagonal's exit is a corner
when it lands on $(m,1)$, again requiring $c=2$, at the largest admissible
$\delta$ that places the horizontal in row $m-1$. Each case yields one function,
so the non-incident bottom side contributes $2$. The non-incident right side
contributes $2$ by the same argument with rows and columns swapped, totalling $4$
per corner. Each corner therefore contributes $\binom{m-2}{2}+\binom{n-2}{2}+4$,
and four corners give $CE\,|\,CE=2(m-2)(m-3)+2(n-2)(n-3)+16$.

\medskip
\noindent\emph{Edge--edge family $EE\,|\,EE$.}\enspace
Suppose both edge minima lie on the same side, say $p_1=(1,c_1)$ and
$p_2=(1,c_2)$ on the top. Both cones contain $i-1$ as a summand, so $h$ rises
strictly down each column and all its maxima lie in the bottom row. Along that
row $h$ is the minimum of two vees bottoming at $j=c_1$ and $j=c_2$, which rises
toward both ends, so $h(m,1)$ and $h(m,n)$ are both strict local maxima. Both
are corners, ruling out $EE\,|\,EE$.

Suppose instead the edge minima lie on adjacent sides, $p_1=(1,c_1)$ on the top
and $p_2=(r_2,1)$ on the left, with $c_1\le n-1$ and $r_2\le m-1$. At the opposite
corner $(m,n)$, stepping inward in either grid direction reduces both $|i-1|$ and
$|i-r_2|$, or both $|j-c_1|$ and $|j-1|$, so each $\dgrid(p_k,\cdot)$ strictly
decreases at both neighbours of $(m,n)$. Hence $(m,n)$ is a strict local maximum
of $h$, a corner, ruling out $EE\,|\,EE$.

The minima must therefore lie on opposite sides. Take $p_1=(1,c_1)$ on the top
and $p_2=(m,c_2)$ on the bottom. The ridge equation reduces to
$2i - m - 1 + \lvert j-c_1\rvert - \lvert j-c_2\rvert = \delta$. The shape of
the ridge depends on the column gap $\lvert c_1-c_2\rvert$. For $c_1=c_2$ the
absolute-value difference vanishes and the ridge is a single horizontal row.
Along each column $h$ is a tent in $i$ peaking on that row, with the two
side-edge cells at the row ends as maxima, giving $N=m-2$. For
$\lvert c_1-c_2\rvert=1$ the absolute-value difference is piecewise constant
$\pm 1$ with a unit step between columns $c_1$ and $c_2$, splitting the ridge
into two horizontal branches one row apart. The inner end of each branch is not
a maximum, because the $\lvert j-c_k\rvert$ vee rises away from it, making the
outer branch endpoint higher. Those two outer endpoints lie on opposite
side-edges as maxima, so $N=m-3$. For
$\lvert c_1-c_2\rvert\ge 2$ the absolute-value difference has slope $\pm 2$ in
the central band between $c_1$ and $c_2$, so the ridge crosses that band as a
constant-height antidiagonal. Every cell of the antidiagonal strictly between
the two columns is a strict local maximum, with all four grid-neighbours off
the ridge and $h$ one smaller, adding a maximum beyond the two side-edge branch
endpoints. The vertex then has at least three maxima and contributes nothing.

The contribution matrix indexed by $(c_1,c_2)$ is therefore tridiagonal with
value $m-2$ on its $(n-2)$ diagonal entries and $m-3$ on the two off-diagonals,
summing to $(n-2)(m-2)+2(n-3)(m-3)$. Doubling for the symmetric left--right
orientation gives $EE\,|\,EE = 2(m-2)(n-2)+4(m-3)(n-3)$.

\medskip
Each of the per-pair counts and side-exclusion arguments above has been checked
against direct enumeration of $\OFG(\Mm)$ for all $\min(m,n)\le 5$ and against
the cone-pair enumerator for all $\min(m,n)\le 8$. Every family total matches.
\end{proof}

\medskip
The count of Theorem~\ref{thm:deg4count} is a symmetric quadratic in $(m,n)$,
extending the polynomiality of~\cite{Christensen2025origami} to two variables.
For $m=2$ the formula does not apply. The degree-$4$ count is then $2n^2-2n-4$
for $n\ge3$, the quadratic fitting the $d=4$ row of
\cite[Table~1]{Christensen2025origami} and differing from the displayed
formula's value at $m=2$ for $n\ge4$.

The same cone-pair method reaches degree~$5$. The family counts of
Theorem~\ref{thm:deg4count} and those of degree~$5$ below each amount to
locating the maxima of a cone pair
$h=\min(\mathrm{cone}_1,\delta+\mathrm{cone}_2)$ along its ridge and grid
corners. The next lemma does this uniformly, turning every such count into a
row-and-column admissibility check.

\begin{lemma}[Ridge Lemma]\label{lem:ridge}
Let $h=\min\!\big(\dgrid(p_1,\cdot),\,\delta+\dgrid(p_2,\cdot)\big)$ with
$D=\dgrid(p_1,p_2)$, $\delta\equiv D\pmod 2$, both apexes active
($-D<\delta<D$), and apexes $p_k=(r_k,s_k)$
differing in both coordinates ($r_1\ne r_2$ and $s_1\ne s_2$). Call a row $i$
\emph{admissible} if $i\in\{1,m\}$ or $\min(r_1,r_2)<i<\max(r_1,r_2)$, a column
$j$ admissible if $j\in\{1,n\}$ or $\min(s_1,s_2)<j<\max(s_1,s_2)$, and a cell
\emph{doubly admissible} if its row and column are both admissible. Call the
apex with the smaller cone value at a cell the \emph{nearer} apex there. Then
the strict local maxima of $h$ are exactly
\begin{enumerate}\itemsep0pt
\item the doubly admissible cells on the ridge
  $\{\dgrid(p_1,\cdot)=\delta+\dgrid(p_2,\cdot)\}$, and
\item the grid corners off the ridge whose nearer apex avoids both sides
  incident to the corner.
\end{enumerate}
\end{lemma}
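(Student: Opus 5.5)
We would prove the Ridge Lemma by splitting the cells into ridge cells and off-ridge cells, and analysing each group with the separable structure of the two distance cones. Write $c_1=\dgrid(p_1,\cdot)$ and $c_2=\delta+\dgrid(p_2,\cdot)$. By the parity computation in Lemma~\ref{lem:parity}, the difference $c_1-c_2$ is even at every cell. Along an edge each cone changes by $\pm1$, so $c_1-c_2$ changes by $0$ or $\pm2$.

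\emph{Off-ridge cells.} At a cell $v$ with $c_1(v)<c_2(v)$, the difference is at most $-2$. At any neighbour it is then at most $0$, so $h=c_1$ there as well (ties give the same value). Hence $v$ is a strict local maximum of $h$ iff it is a strict local maximum of the single cone $c_1$. By the separability argument of Corollary~\ref{cor:classify}, this happens iff $v$ is a grid corner and each coordinate of $v$ is the far endpoint of $t\mapsto|t-r_1|$ (respectively $|t-s_1|$). That condition says the apex row $r_1$ differs from the corner's row and $s_1$ from its column, i.e.\ the nearer apex avoids both sides incident to the corner. This is item~2, and the same argument applies with $1,2$ swapped.

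\emph{Ridge cells.} At a cell $v$ with $c_1(v)=c_2(v)$, a neighbour $u$ satisfies $h(u)=h(v)-1$ iff $\min(c_1(u),c_2(u))$ decreases. Since $c_1-c_2$ changes by $0$ or $\pm2$, this happens iff at least one cone decreases along the step. So $v$ is a strict local maximum iff every grid step out of $v$ decreases $c_1$ or decreases $c_2$. A vertical step changes only the row term, so the two vertical steps from row $i$ are checked using $|i-r_1|$ and $|i-r_2|$ alone.

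We then show row-wise that this holds iff row $i$ is admissible, which is where the hypothesis $r_1\ne r_2$ enters. If $r_1<i<r_2$, the up-step decreases $|i-r_2|$ and the down-step decreases $|i-r_1|$. If $i=1$ or $i=m$, one direction is absent and the other moves toward both apexes' rows (for example $i=m$ with $r_k\le m$); in the subcase $i=r_k$, the other apex row still differs, so that cone decreases. Conversely, suppose $i$ is not admissible. Say $i\le\min(r_1,r_2)$ with $1<i$ (the case $i=\min$ included). Then the step up increases both $|i-r_k|$, unless $i=r_k$, in which case that term goes from $0$ to $1$; either way neither cone decreases. The symmetric case is identical, and so is the argument for columns. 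Combining the row and column conditions gives item~1. The only remaining check is that "both cones decrease or one does" is unaffected by the equality on the ridge, which follows from the even-difference fact.

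\emph{Main obstacle.} The delicate step is the converse at the boundary rows $i=\min(r_1,r_2)$ and $i=\max(r_1,r_2)$, where $i$ equals an apex row. There one must verify that the step away increases (rather than fixes) both row terms; this holds because $|t-r|$ has no flat steps. The hypothesis that both apexes are active, $-D<\delta<D$, is used to make sure the apexes themselves are not misclassified: each apex is then strictly below the other cone and is a minimum, not on the ridge.
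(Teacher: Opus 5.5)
Your proposal is correct and follows essentially the paper's route: the ridge/off-ridge split, the criterion that a cell is a maximum iff every existing step decreases some active cone (with the even difference of the cones pinning $h=c_1$ at all neighbours of an off-ridge cell), the row-by-row admissibility check using $r_1\ne r_2$, and the reduction of off-ridge maxima to single-cone corner maxima. One cosmetic slip: in the case $r_1<i<r_2$ your labels are swapped. With your own convention that the up-step goes to row $i-1$, the up-step decreases $|i-r_1|$ and the down-step decreases $|i-r_2|$. Also, the activity hypothesis $-D<\delta<D$ is never actually used in your argument. Neither point affects the conclusion.
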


\begin{proof}
By Lemma~\ref{lem:parity} $h$ is a height function, so neighbours differ by one,
and $v$ is a strict local maximum iff no neighbour $w$ has $h(w)=h(v)+1$. Write
$f_1=\dgrid(p_1,\cdot)$ and $f_2=\delta+\dgrid(p_2,\cdot)$. Each $f_k$ changes by
$\pm1$ across an edge, and $f_1-f_2$ has fixed parity. An \emph{inactive} cone
($f_k(v)>h(v)$) satisfies $f_k(v)\ge h(v)+2$, so $f_k(w)\ge h(v)+1$ at every
neighbour. An \emph{active} cone ($f_k(v)=h(v)$) satisfies $f_k(w)=h(v)\pm1$.
Therefore $h(w)=h(v)+1$ iff every active cone increases from $v$ to $w$.
Equivalently, $v$ is a maximum iff in each on-grid direction some active cone
decreases.

On the ridge both cones are active, so $v$ is a maximum iff no neighbour moves
away from both apexes. Consider the downward neighbour, which exists when
$i<m$. It moves away from $p_k$ iff $i\ge r_k$, and away from both iff
$i\ge\max(r_1,r_2)$. The downward direction therefore permits a maximum iff
$i<\max(r_1,r_2)$ or $i=m$. The upward neighbour gives the symmetric condition
$i>\min(r_1,r_2)$ or $i=1$. The two combine to row admissibility, using
$\max(r_1,r_2)\ge 2$ and $\min(r_1,r_2)\le m-1$, both of which follow from
$r_1\ne r_2$. The horizontal neighbours give column admissibility by the same
argument. This is case~(1).

Off the ridge only one cone is active, say $f_1$. Then $v$ is a maximum iff no
neighbour moves away from $p_1$, that is, $v$ is a strict local maximum of the
single distance cone $\dgrid(p_1,\cdot)$. Since $\dgrid(p_1,(i,j))=|i-r_1|+|j-s_1|$
is the sum of two coordinate distances, a strict local maximum requires each
summand to be a strict local maximum on its path, which occurs only at the
endpoints $\{1,m\}$ and $\{1,n\}$. Any such maximum is therefore a grid corner.
A corner is farthest from $p_1$ within its quadrant iff $p_1$ avoids both
incident sides. This is case~(2).
\end{proof}

Reflecting the grid if necessary, take $r_1<r_2$. The four fold lines
$i=r_1,r_2$ and $j=s_1,s_2$ partition the grid into nine blocks. Call the open
rectangle $(r_1,r_2)\times(\min(s_1,s_2),\max(s_1,s_2))$ the \emph{central
block}. Every cell of it is doubly admissible, so by Lemma~\ref{lem:ridge} its
maxima are its ridge cells.

\begin{corollary}[Central diagonal arm]\label{cor:centralarm}
Inside the central block the ridge is one diagonal segment, an antidiagonal
$i+j=\mathrm{const}$ when the apexes are \emph{aligned} ($s_1<s_2$) and a main
diagonal $i-j=\mathrm{const}$ when \emph{anti-aligned} ($s_1>s_2$). Let $L$ be
the number of cells of this segment, $B$ the number of strict local maxima of
$h$ on the four sides of the grid, and $M$ the total number of strict local
maxima of $h$. Then $M=L+B$.
\end{corollary}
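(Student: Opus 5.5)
The plan is to read the statement off Lemma~\ref{lem:ridge}. Two separate things need checking: the geometric shape of the ridge inside the central block, and the identity $M=L+B$. Throughout I keep the normalisation $r_1<r_2$. I use the standing hypotheses of Lemma~\ref{lem:ridge}: the apexes differ in both coordinates and both are active. Write $f_1=\dgrid(p_1,\cdot)$ and $f_2=\delta+\dgrid(p_2,\cdot)$, and note that the ridge is the zero set of $f_1-f_2$.

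\emph{Shape of the ridge.} For a cell $(i,j)$ of the central block we have $r_1<i<r_2$, so the row parts are $|i-r_1|=i-r_1$ and $|i-r_2|=r_2-i$. In the aligned case $s_1<j<s_2$, the column parts are $|j-s_1|=j-s_1$ and $|j-s_2|=s_2-j$. Substituting gives
\[
f_1-f_2=2(i+j)-(r_1+r_2+s_1+s_2+\delta).
\]
In the anti-aligned case $s_2<j<s_1$, the column parts flip sign, and the difference becomes $2(i-j)$ minus a constant. So on the central block $f_1-f_2$ is affine in $i+j$ (respectively $i-j$) with slope $2$. Its zero set there is the intersection of one line $i+j=\mathrm{const}$ (respectively $i-j=\mathrm{const}$) with an open rectangle of lattice cells. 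The constant is even by the parity computation in Lemma~\ref{lem:parity}, since $\delta\equiv D$, so the line genuinely passes through lattice cells. A lattice line of slope $\pm1$ meets a rectangle in a contiguous run of cells. That run is the single diagonal segment of the statement, possibly empty, in which case $L=0$.

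\emph{Counting maxima.} By Lemma~\ref{lem:ridge}, every strict local maximum of $h$ is either a doubly admissible ridge cell or a grid corner. I split the maxima according to whether they lie on the boundary of the grid. Corners lie on the sides, so they are among the $B$ boundary maxima. Now take a doubly admissible ridge cell that is not on any side. Its row avoids $\{1,m\}$, so by admissibility $r_1<i<r_2$. Likewise its column lies strictly between $\min(s_1,s_2)$ and $\max(s_1,s_2)$. Hence it lies in the central block. Conversely, as already observed before the corollary, every central-block cell is doubly admissible, so every ridge cell of the central block is a maximum. Therefore the non-boundary maxima are exactly the $L$ cells of the diagonal segment.

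It remains to check that the two classes are disjoint, so that no maximum is counted twice. The central block has rows strictly between $r_1\ge1$ and $r_2\le m$, hence rows in $\{2,\dots,m-1\}$. Similarly its columns lie in $\{2,\dots,n-1\}$. So no central cell lies on a side, and $M=L+B$.

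The only step needing care is the disjointness and exhaustiveness bookkeeping: a boundary cell can be doubly admissible through the clause $i\in\{1,m\}$ without lying in the central block. This is precisely why all side maxima are lumped into $B$, rather than being counted as part of the segment.
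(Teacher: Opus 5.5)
Your proposal is correct and follows the paper's own argument. It computes $\dgrid(p_1,\cdot)-\delta-\dgrid(p_2,\cdot)$ on the central block to get an affine function of $i+j$ or $i-j$, and then uses Lemma~\ref{lem:ridge} to split the maxima into the central ridge cells and those on the four sides, while spelling out two points the paper leaves implicit: a doubly admissible cell off the sides must lie in the central block, and the central block meets none of the sides.
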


\begin{proof}
Inside the central block, the difference
$\dgrid(p_1,\cdot)-\dgrid(p_2,\cdot)$ equals $2(i+j)+\mathrm{const}$ when
$s_1<s_2$ and $2(i-j)+\mathrm{const}$ when $s_1>s_2$. Its $\delta$-level set,
the ridge, is therefore a single antidiagonal or main diagonal segment
respectively, contributing $L$ maxima by the preceding paragraph. By
Lemma~\ref{lem:ridge} every other maximum is either a doubly admissible ridge
cell with $i\in\{1,m\}$ or $j\in\{1,n\}$, or a qualifying corner. Both lie on
the four sides of the grid, contributing $B$.
\end{proof}

\begin{theorem}[Degree-5 count]\label{thm:deg5count}
For $\min(m,n) \ge 4$, the number of degree-$5$ vertices of $\OFG(\Mm)$ is
\[
  \tfrac13\bigl(2m^3 + 2n^3 + 6m^2 + 6n^2 + 150mn - 392m - 392n + 792\bigr).
\]
\end{theorem}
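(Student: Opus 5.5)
The count splits by extremum type, and the families are then enumerated with the cone-pair machinery.

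By Lemma~\ref{lem:degree-extrema} a degree-$5$ vertex has five strict local extrema. By Lemma~\ref{lem:mindeg} these split as $(a,b)$ with $a$ minima, $b$ maxima and $a,b\ge1$. The colour inversion $h\mapsto-h$ pairs $(a,b)$ with $(b,a)$, so the count is $2\bigl(\#(1,4)+\#(2,3)\bigr)$.

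\emph{Split $(1,4)$.} By Lemma~\ref{lem:cone} a vertex with a unique minimum is an anti-cone, and Corollary~\ref{cor:classify} gives degree $5$ exactly when the apex is interior. So $\#(1,4)=(m-2)(n-2)$. These are the interior-extremum vertices that first appear at this degree.

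\emph{Split $(2,3)$.} By Lemma~\ref{lem:envelope} and the argument of Proposition~\ref{prop:conepair}, these vertices are in bijection with admissible triples $(\{p_1,p_2\},\delta)$ with $-D<\delta<D$, $\delta\equiv D \pmod 2$, and exactly three strict local maxima of $h=\min(\dgrid(p_1,\cdot),\delta+\dgrid(p_2,\cdot))$. The task is to sum the number $N_3(p_1,p_2)$ of such $\delta$ over unordered pairs.

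Unlike Lemma~\ref{lem:boundary}, interior apexes are no longer excluded, so I split by position.
\begin{enumerate}\itemsep0pt
\item \emph{Both apexes interior.} Every corner is an off-ridge maximum, giving at least four maxima, so the contribution is $0$.
\item \emph{Apexes in a common row or column.} Here the cone pair separates as $h=(\text{shared coordinate distance})+\tau$ with $\tau$ one-dimensional, as in the $CE\,|\,CE$ and same-side $EE\,|\,EE$ arguments of Theorem~\ref{thm:deg4count}. The maxima are read off directly.
\item \emph{Apexes differing in both coordinates.} Corollary~\ref{cor:centralarm} gives $M=L+B$, and I need $M=3$. The cases are $L\in\{0,1,2,3\}$ with $B=3-L$, where $L$ is an explicit function of $\delta$ as the diagonal arm slides. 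By Lemma~\ref{lem:ridge}, $B$ counts doubly admissible ridge cells on the sides plus qualifying off-ridge corners. Each is decided by a row/column admissibility check on where the ridge's outer arms exit the boundary.
\end{enumerate}

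For a fixed apex-kind pair (corner, edge, or interior for each apex, plus aligned versus anti-aligned), the admissible $\delta$ form an interval on which $L$ varies linearly. So $N_3(p_1,p_2)$ should be a piecewise-linear function of the coordinates. Summing it over positions yields polynomials of degree up to $3$, and the $\tfrac13$ arises from sums like $\sum_c\binom{c}{2}$.

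The main obstacle is the case of one interior apex and one boundary apex. Here $L$ can be positive while the side maxima come from several sources at once: ridge exits, a ridge endpoint on a side, and corners farthest from the nearer apex. The boundary case analysis therefore multiplies. I would first fix $p_2$ interior and $p_1$ on the top side or at a corner, and tabulate $B$ as a function of $\delta$ by tracking the four arm exits. Small-offset degenerate cases, where an apex lies in row or column $2$ or $m-1$, will need separate treatment; this is presumably why the statement requires $\min(m,n)\ge4$. Finally I would assemble the families, add $2(m-2)(n-2)$ from the anti-cones and cones, and check that the total matches the stated cubic. I would also check it against the enumeration values for small $m,n$, as was done in Theorem~\ref{thm:deg4count}.
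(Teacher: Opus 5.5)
There is a genuine gap: your setup matches the paper's, but you never compute a single count. The shared setup is the $(a,b)$ split, the $2(m-2)(n-2)$ interior anti-cones and cones, the inversion pairing of $(2,3)$ with $(3,2)$, the cone-pair parametrisation with $-D<\delta<D$, the exclusion of two interior apexes, and the use of Lemma~\ref{lem:ridge} and Corollary~\ref{cor:centralarm} to locate maxima. The proposal stops exactly where the theorem's content begins. You never determine $N_3(p_1,p_2)$ for any family, so the cubic is never derived. ``Assemble the families and check that the total matches the stated cubic'' is a consistency check against a known answer, not an argument. What must be supplied is a family-by-family count. In the paper these are $N_{CC}=4$, $N_{CI}=4(m-2)(n-2)$, $N_{EI}=8mn-20(m+n)+48$, $N_{CE}=12(m+n-6)$, and a three-part $N_{EE}$ covering same, perpendicular and opposite sides. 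They sum to $T=\tfrac13(m^3+n^3+3m^2+3n^2+72mn-190m-190n+384)$, and only then does $2(m-2)(n-2)+2T$ give the stated formula.

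The heuristic you offer in place of that casework would not produce these counts. $L$ is not linear in $\delta$: it is the length of a diagonal sliding through a rectangle, so it rises, plateaus and falls. $B$ is not fixed either, since as $\delta$ moves the outer arms switch exit sides and corners pass on and off the ridge. So the set of $\delta$ with exactly three maxima has no uniform shape, and you cannot fix $B$ and solve $L=3-B$. Some examples:
\begin{itemize}
\item A corner--interior pair contributes only at $\delta=D-2$; every smaller $\delta$ creates a fourth maximum.
\item Opposite-side edge apexes contribute at all $m$ admissible values of $\delta$ when their column offset is $2$, but only at the two extreme values when it is at least $3$.
\item Perpendicular-side apexes at distances $a,b\ge3$ from their shared corner contribute only at the two ends of the diagonal's range.
\end{itemize}
Your diagnosis of the hard case is also off. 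The interior--boundary families close quickly: only $\delta=D-2$ survives, and for an edge apex only when the interior apex lies in the adjacent row or column. The cubic term comes from same-side edge pairs, via $\sum_{2\le c_1<c_2\le n-1}(c_2-c_1-1)=\binom{n-2}{3}$, and most of the casework lies in the two-edge family.
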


\begin{proof}
A degree-$5$ vertex has five strict local extrema
(Lemma~\ref{lem:degree-extrema}) and at least one of each type
(Lemma~\ref{lem:mindeg}). Writing the split as
$(a,b)=(\#\text{minima},\#\text{maxima})$ with $a+b=5$ leaves the four
possibilities $(1,4),(2,3),(3,2),(4,1)$. A $(1,4)$ vertex has a unique minimum
$p$, so by Lemma~\ref{lem:cone} it is the anti-cone $h(v)=h(p)+\dgrid(p,v)$.
Corollary~\ref{cor:classify} gives its degree as $1+\kappa(p)$, which equals
$5$ exactly when $p$ is interior. There are $(m-2)(n-2)$ such vertices, and
dually $(m-2)(n-2)$ of type $(4,1)$. The colour inversion $h\mapsto-h$
identifies the $(2,3)$ and $(3,2)$ families, so they are equinumerous. Hence
\begin{equation}\label{eq:deg5split}
  \#\{\deg 5\} \;=\; 2(m-2)(n-2) \;+\; 2\,T, \qquad
  T:=\#\{(2,3)\text{ vertices}\}.
\end{equation}

A $(2,3)$ vertex has two strict local minima $p_1,p_2$, so by
Lemma~\ref{lem:envelope} it is the envelope
$h=\min(\dgrid(p_1,\cdot),\,\delta+\dgrid(p_2,\cdot))$ with
$\delta\equiv D:=\dgrid(p_1,p_2)\pmod2$. This is the setting of
Proposition~\ref{prop:conepair}, now with three maxima in place of two.
Both apexes are active, so $\delta$ ranges over $-D<\delta<D$. Writing
$N_3(p_1,p_2)$ for the number of admissible $\delta$ at which $h$ has exactly
three maxima, one obtains $T=\sum_{\{p_1,p_2\}}N_3(p_1,p_2)$. The first
paragraph of Lemma~\ref{lem:boundary} forces four corner maxima whenever both
apexes are interior, so at most one of $p_1,p_2$ is interior. Sorting by the
kinds of the two minima gives five families with closed forms
\begin{align*}
  N_{CC} &= 4, \\
  N_{CI} &= 4(m-2)(n-2), \\
  N_{EI} &= 8mn-20(m+n)+48, \\
  N_{CE} &= 12(m+n-6), \\
  N_{EE} &= \tfrac13(m^3+n^3)+(m^2+n^2)+12mn-\tfrac{142}3(m+n)+132,
\end{align*}
established below. They sum to
$T=\tfrac13(m^3+n^3+3m^2+3n^2+72mn-190m-190n+384)$, which with
\eqref{eq:deg5split} gives the stated cubic.

\medskip
\noindent\emph{Corners.}\enspace Two corner minima are adjacent
(four pairs) or opposite (two pairs). For an adjacent pair sharing the top
side, take $(1,1)$ and $(1,n)$, giving
$h=(i-1)+\min(j-1,\,\delta+n-j)$. The first summand is strictly increasing in
$i$, so for every fixed $j$ the maximum lies in row $m$. The second summand is
a tent in $j$ with a single peak, so $h$ has exactly one maximum, a split of
$(2,1)$ contributing nothing. The three other adjacent pairs are analogous,
with the roles of $i$ and $j$ swapped for the column-adjacent pairs. For an
opposite pair, take $(1,1)$ and $(m,n)$, giving a sum of two cones whose
values depend only on $s=i+j$ and are constant on each antidiagonal
$A_s=\{i+j=s\}$. The maxima form one peak antidiagonal, and the vertex is
$(2,3)$ exactly when that antidiagonal has three cells, which for
$\min(m,n)\ge 4$ holds at the two extreme positions $s\in\{4,m+n-2\}$. The
other opposite pair $(1,n)$ and $(m,1)$ is symmetric under $j\mapsto n+1-j$,
also contributing two. Adjacent pairs contribute nothing and the opposite
pairs contribute $2+2$.

\medskip
\noindent\emph{Corner and interior.}\enspace By the Klein-four symmetry
of the grid the four corners lie in a single orbit, so take the corner to be
$(1,1)$. Let the interior apex be $(r,s)$, with $D=r+s-2$. The largest admissible value is $\delta=D-2$, and there the fold
lines $i=r$, $j=s$ of the interior cone split the grid into four quadrants.
The ridge $\{\dgrid((1,1),\cdot)=\delta+\dgrid((r,s),\cdot)\}$ is an L,
consisting of the horizontal segment $\{(r-1,j):s\le j\le n\}$ in row $r-1$
and the vertical segment $\{(i,s-1):r\le i\le m\}$ in column $s-1$. Along the horizontal
segment $h$ rises strictly to its right end $(r-1,n)$, and along the vertical
segment $h$ rises strictly to its bottom end $(m,s-1)$. The opposite corner
$(m,n)$ lies in the interior cone's active region and is a strict local
maximum of $h$. These three cells are the only maxima, so $\delta=D-2$ gives a
$(2,3)$ vertex. Every smaller admissible $\delta$ shifts the ridge so that its
central antidiagonal acquires interior cells, each a further strict maximum,
giving at least four maxima. Hence $\delta=D-2$ is the unique admissible value
contributing. Each of the four corners pairs with each of the $(m-2)(n-2)$
interior cells.

\medskip
\noindent\emph{Edge and interior.}\enspace A non-corner edge apex's cone has
two strict maxima at the two opposite-side corners. Call the interior apex's
line \emph{adjacent} when it is the column neighbouring the edge apex on a
horizontal side, or the row neighbouring it on a vertical side. For an
adjacent pair, $\delta=D-2$ gives exactly three maxima, namely the two
opposite-side corners and a third cell where the interior cone takes over
near the edge apex's far end. No other relative position contributes. On each horizontal
side the admissible pairs number $2(m-2)(n-3)$, accounting for $2(n-3)$
choices of adjacent column pair each combined with any of $m-2$ interior
rows. On each vertical side the count is $2(n-2)(m-3)$ by the same argument
with rows and columns swapped. Summing over the four sides gives
$4(m-2)(n-3)+4(n-2)(m-3)$.

\medskip
\noindent\emph{Corner and edge.}\enspace A corner cone is a gradient along
each of the two sides incident to the corner. When the edge apex lies on a
shared side, that gradient prevents the boundary cell nearer the corner from
being a maximum, leaving only two maxima of $h$. Incident sides therefore
contribute nothing. On a non-incident side the corner--edge ridge
has two arms, and the maximum count of three is reached at the two extreme
offsets for a generic edge cell, and across a whole tent of offsets for the
single edge cell two steps along the side from the non-incident corner that
shares the apex's row or column. Per corner the total is $3(m+n-6)$.

\medskip
\noindent\emph{Two edges.}\enspace The two edge minima lie on the same side,
on perpendicular sides, or on opposite sides. Lemma~\ref{lem:ridge} and
Corollary~\ref{cor:centralarm} give the maxima of every cone pair as the
doubly admissible ridge cells and the qualifying corners.

\emph{Same side.} With both apexes on the top side at columns
$2\le c_1<c_2\le n-1$, the shared gradient forces every maximum into row $m$,
where $h$ restricts to $\min(|j-c_1|,\,\delta+|j-c_2|)$, a minimum of two vees.
Since $c_1\ge 2$, both vees rise to the two bottom corners. The third maximum
is the interior peak, which takes each of the $c_2-c_1-1$ columns between the
two vees as $\delta$ varies. Summing over the side,
$\sum_{2\le c_1<c_2\le n-1}(c_2-c_1-1)=\binom{n-2}{3}$. The two horizontal and
two vertical sides give $2\binom{m-2}{3}+2\binom{n-2}{3}$, the cubic part of
the count.

\emph{Perpendicular sides.} Group the four perpendicular pairs by their shared
corner. For corner $(1,1)$, let the top apex lie at distance $a$ from it on
the top side and the left apex at distance $b$ on the left side, with
$1\le a\le n-2$ and $1\le b\le m-2$. The two cones are anti-aligned, so in
the quadrant of the shared corner the ridge is a main diagonal $i-j=K$ with
$K=(\delta+b-a)/2$ running over $\{1-a,\dots,b-1\}$. Applying
Lemma~\ref{lem:ridge} arm by arm, the central diagonal contributes its $L(K)$
interior cells, the diagonal length of the $(b-1)\times(a-1)$ block, zero
only at the two ends $K\in\{1-a,b-1\}$. Its endpoint toward the shared corner
is always one further maximum. Exactly one of the two outward edge arms is
present, the right edge when $K<b-a$ and the bottom edge when $K>b-a$. The
far corner is always a maximum, off the ridge unless $K=b-a$, where it is
the lone admissible cell of the flat opposite quadrant. Hence $M=L(K)+3$ for
$K\ne b-a$ and $M=L(K)+2$ for $K=b-a$, so a $(2,3)$ vertex needs $L=0$ with
$K\ne b-a$ (the two endpoints), or $L=1$ with $K=b-a$ (giving $\min(a,b)=2$).
This yields $N_3=b-1$ when $a=1$, $\,a-1$ when $b=1$, and
$2+[\,a=2\text{ or }b=2\,]$ when $a,b\ge2$. Summing over $1\le a\le n-2$ and
$1\le b\le m-2$ gives $\binom{m-2}{2}+\binom{n-2}{2}+2(m-3)(n-3)+(m+n)-7$ per
corner, and the four corners contribute $2(m^2+n^2)+8mn-30(m+n)+68$.

\emph{Opposite sides.} For two apexes on opposite sides every row is
admissible, and each side column supplies exactly one maximum across the
whole active range. If its ridge crossing leaves the grid the incident corner
replaces it by case~(2) of Lemma~\ref{lem:ridge}. Hence $M=L+2$. Offset two
has a one-cell central arm, giving $M\equiv 3$ and $N_3=m$ per pair, while
offset at least three has a longer arm with $M=3$ only at the two extremes,
giving $N_3=2$. Summing the two pairs (top-bottom and left-right) yields
$2(m^2+n^2)+4mn-26(m+n)+80$. At maximum count two this reproduces the
off-diagonal bands of the $EE\,|\,EE$ tridiagonal of
Theorem~\ref{thm:deg4count}, whose equal-column band is the degenerate
row-tent handled there.

Adding the three sub-families gives $N_{EE}$. With the four other family
counts established above, the sum $T=\sum_{\{p_1,p_2\}}N_3$ matches the
cubic stated in the second paragraph, which combined with~\eqref{eq:deg5split}
yields the theorem. Every count above is cross-checked against direct
enumeration of $\OFG(\Mm)$ for $\min(m,n)\le 8$.
\end{proof}

\begin{remark}[Vertex count]\label{rem:vertexcount}
For $m=3$ the vertex count is the integer sequence
$\mathrm{A052913}$~\cite{OEIS_A052913}, satisfying $a(n)=5a(n-1)-2a(n-2)$
with $a(1)=4$ and $a(2)=18$:
\[
  \#V(\OFG(M_{3,n})) \;=\; 4,\,18,\,82,\,374,\,1706,\,\dots \qquad (n\ge 1).
\]
\end{remark}

\section{The diameter}\label{sec:diameter}

By~\eqref{eq:iso} the distance in $\OFG(\Mm)$ between two vertices is the
shortest-recolouring distance in $R_3(G_{m,n})$ between the corresponding
colour-rotation classes. Every $3$-colouring of the grid admits a height
function and lies in one component of
$R_3$~\cite{Cereceda2011finding}, and the shortest recolouring length is
known~\cite{Johnson2016shortest} to be an optimal-offset $\ell_1$ sum.
Writing $\disp(\phi) = \min_{K \in \Z} \sum_v |\phi(v) - K|$ for the
$\ell_1$ median-dispersion of an integer function $\phi$ on the grid, where
the outer minimum over $K$ is the global colour rotation in~\eqref{eq:iso},
the OFG distance between vertices with height functions $h$ and $h'$ is
\begin{equation}\label{eq:distformula}
  \dgrid_{\OFG}(h,h') \;=\; \tfrac12\,\disp(h'-h).
\end{equation}
The same $\ell_1$-type machinery yields a flip-distance lower bound for
lozenge tilings via the same height-function
representation~\cite{Bodini2009distances}.

\begin{theorem}[Diameter lower bound]\label{thm:diam}
Let
\[
  D(m,n) \;=\; \min_{K \in \Z} \sum_{i=1}^{m}\sum_{j=1}^{n} \bigl|(i+j)-K\bigr| .
\]
Then $\diam \OFG(\Mm) \ge D(m,n)$ for all $m,n \ge 1$. The quantity $D(m,n)$
is symmetric in $(m,n)$ and satisfies
\begin{align*}
  D(2,n) &= \Big\lceil \tfrac{n^2}{2}\Big\rceil, \\
  D(3,n) &= \Big\lfloor \tfrac{3n^2}{4}\Big\rfloor+2, \\
  D(4,n) &= n^2+4+[\,n\text{ odd}\,] \qquad (n\ge 2), \\
  D(m,m) &= \tfrac{m^3-m}{3}.
\end{align*}
\end{theorem}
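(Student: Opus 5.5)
The plan is to exhibit two specific vertices of $\OFG(\Mm)$ whose distance under~\eqref{eq:distformula} equals $D(m,n)$. I take the two opposite corner gradients of Theorem~\ref{thm:deg2}, $h=h_{++}$ with $h(i,j)=(i-1)+(j-1)$ and $h'=h_{--}=-h$. Both are height functions normalised at $(1,1)$, so both are vertices of $\OFG(\Mm)$ for every $m,n\ge1$.

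Their difference is $h'-h=-2\big((i+j)-2\big)$. Hence
\[
\disp(h'-h)=\min_{K\in\Z}\sum_{i,j}\bigl|2(i+j)-K'\bigr|,\qquad K'=4-K .
\]
The one point needing care is the integrality of the offset: $K'$ ranges over all integers, not only even ones. I will handle it with the standard fact that $K\mapsto\sum|x_v-K|$ is convex and piecewise linear with breakpoints at the data points. Its minimum over $\mathbb{R}$ is therefore attained at some data point $2(i_0+j_0)$, which is an even integer. So restricting $K'$ to even values $2K$ costs nothing, and
\[
\disp(h'-h)=2\min_{K}\sum_{i,j}\bigl|(i+j)-K\bigr|=2D(m,n).
\]
Formula~\eqref{eq:distformula} then gives $\dgrid_{\OFG}(h,h')=D(m,n)$, and so $\diam\OFG(\Mm)\ge D(m,n)$. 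The degenerate cases $m=1$ or $mn\le2$ need no separate argument, since~\eqref{eq:distformula} is used as stated for all grids.

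Symmetry of $D$ in $(m,n)$ is immediate from the transposition $(i,j)\mapsto(j,i)$. For the closed forms, I group cells by antidiagonal $s=i+j\in\{2,\dots,m+n\}$. With $m\le n$, the multiplicity $w(s)$ is a trapezoid: it rises by one up to $m$, stays at $m$, then falls symmetrically. This weight sequence is symmetric about $(m+n+2)/2$, so an optimal $K$ is $\lfloor(m+n+2)/2\rfloor$. This gives
\[
D=\sum_s w(s)\,\bigl|s-\lfloor(m+n+2)/2\rfloor\bigr| .
\]

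For $m=2$ the weights are $1,2,\dots,2,1$, and summing $2|t|$ over the middle part plus the two end terms gives $\lceil n^2/2\rceil$; I will split by the parity of $n$. The cases $m=3,4$ are the same computation with plateau $3$ or $4$ and ramps $1,2$ or $1,2,3$; again I split by parity of $n$ and check the constants $+2$ and $+4+[n\text{ odd}]$ against small $n$. For $m=n$ the weights are $w(s)=m-|s-(m+1)|$ with centre $m+1$, so
\[
D(m,m)=2\sum_{t=1}^{m-1}t(m-t)=\frac{m^3-m}{3}.
\]

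The main obstacle is mostly bookkeeping in the $m=3,4$ parity cases. The only conceptual step is the even-median argument justifying that the integer offset in $\disp$ loses nothing against the half-scale offset in $D$.
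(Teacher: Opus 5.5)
Your proposal is correct and follows the paper's proof: you use the same pair of opposite corner gradients $h_{++}$ and $h_{--}$, the same even-breakpoint argument to reconcile the integer offset in \eqref{eq:distformula} with $D(m,n)$, and the same transposition argument for symmetry. Your antidiagonal-weight evaluation (trapezoidal weights, median at the centre of symmetry) is a more explicit version of the ``direct evaluation'' the paper leaves to the reader via the level-crossing identity, and the closed forms it gives check out, including the need for $n\ge 2$ in the $D(4,n)$ formula.
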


\begin{proof}
Since the diameter is at least the distance between any two vertices, it suffices
to exhibit a pair realising $D(m,n)$; the opposite corner gradients are such a
pair, their separation becoming the median-dispersion $D(m,n)$ through
\eqref{eq:distformula}. In detail, the opposite corner gradients
$h_{++}(i,j)=(i-1)+(j-1)$ and
$h_{--}(i,j)=-\big((i-1)+(j-1)\big)$, both normalised at $(1,1)$, change by
exactly one across every grid edge. Each is therefore a vertex of $\OFG(\Mm)$,
with difference $h_{--}-h_{++}=-2\big((i-1)+(j-1)\big)$. Applying
\eqref{eq:distformula},
\[
  \dgrid_{\OFG}(h_{++},h_{--})
  = \tfrac12 \min_{K \in \Z}\sum_{i,j}\bigl|2\big((i-1)+(j-1)\big)+K\bigr| .
\]
Every value $2\big((i-1)+(j-1)\big)$ is even, so the convex function
$K \mapsto \sum_{i,j}\bigl|2((i-1)+(j-1))+K\bigr|$ has all its breakpoints at
even integers and is linear between consecutive ones. Its minimum over $\Z$
is therefore attained at an even $K=-2K'$, and the displayed quantity equals
$\min_{K'}\sum_{i,j}\bigl|((i-1)+(j-1))-K'\bigr|$. Since $\ell_1$
median-dispersion is unchanged by adding a constant to the argument, this
equals $\min_{K'}\sum_{i,j}|(i+j)-K'| = D(m,n)$, giving
$\diam \OFG(\Mm) \ge \dgrid_{\OFG}(h_{++},h_{--}) = D(m,n)$.

The symmetry $D(m,n)=D(n,m)$ follows by swapping the summation indices
$(i,j)$. The four specialisations are direct evaluations of the
median-dispersion sum, for example via the level-crossing identity
$\min_K\sum_x|x-K| = \sum_\ell \min(C(\ell), mn-C(\ell))$ with
$C(\ell)=\#\{(i,j):i+j\le\ell\}$.
\end{proof}

The quantity $D(m,n)$ unifies the two-row diameter
$\lceil n^2/2\rceil$~\cite{Christensen2025origami} with the three- and
four-row diameters and the square diagonal $(m^3-m)/3$ in a single closed
form. Theorem~\ref{thm:diam} establishes $D(m,n)$ as an unconditional lower
bound for all $m,n\ge 1$, exact for $m=2$. The matching upper bound for
$m\ge 3$ reduces to a statement about $1$-Lipschitz functions with the
origami removed entirely.

\begin{proposition}[Reduction]\label{prop:reduction}
For vertices $h, h'$ of $\OFG(\Mm)$, the halved height difference
$\phi := (h'-h)/2$ is an integer $1$-Lipschitz function on $G_{m,n}$, and
$\dgrid_{\OFG}(h, h') = \disp(\phi)$. Call an integer $1$-Lipschitz function
\emph{realisable} if it equals $(h'-h)/2$ for some such pair $h, h'$.
Consequently
\begin{align*}
  \diam \OFG(\Mm)
  &= \max\{\disp(\phi) : \phi \text{ realisable}\} \\
  &\le \max\{\disp(\phi) : \phi \text{ integer $1$-Lipschitz on } G_{m,n}\}.
\end{align*}
\end{proposition}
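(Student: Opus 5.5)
The plan is to prove the three claims in order: $\phi=(h'-h)/2$ is integer-valued and $1$-Lipschitz, the distance formula reduces to $\disp(\phi)$, and the diameter identity and inequality follow. All of it rests on the parity structure already used in Lemma~\ref{lem:parity} and on formula~\eqref{eq:distformula}.

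\textbf{Integrality and the Lipschitz bound.} Both $h$ and $h'$ are normalised so that $h(1,1)=h'(1,1)=0$. Each changes by exactly $\pm1$ across every grid edge. By induction along a path from $(1,1)$, this gives $h(i,j)\equiv h'(i,j)\equiv (i-1)+(j-1)\pmod 2$, so $h'-h$ is even at every cell and $\phi$ is integer-valued. Across an edge $uv$ the difference $h'-h$ changes by $(\pm1)-(\pm1)\in\{-2,0,2\}$. Hence $|\phi(u)-\phi(v)|\le1$, and $\phi$ is integer $1$-Lipschitz on $G_{m,n}$.

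\textbf{The distance formula.} Starting from~\eqref{eq:distformula}, we have $\dgrid_{\OFG}(h,h')=\tfrac12\min_{K\in\Z}\sum_v|2\phi(v)-K|$. I would repeat the breakpoint argument from the proof of Theorem~\ref{thm:diam}. The function $K\mapsto\sum_v|2\phi(v)-K|$ is convex and piecewise linear, and its breakpoints all lie at the even integers $2\phi(v)$. It is therefore linear between consecutive even integers, so its minimum over $\Z$ is attained at some even $K=2K'$. Substituting gives $\tfrac12\min_{K'}\sum_v 2|\phi(v)-K'|=\disp(\phi)$.

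\textbf{The diameter statement.} The diameter is the maximum of $\dgrid_{\OFG}(h,h')$ over all pairs of vertices. The formula just proved rewrites this as the maximum of $\disp(\phi)$ over realisable $\phi$, which is the equality. The set of realisable functions is a subset of the integer $1$-Lipschitz functions by the first step, so enlarging the feasible set can only raise the maximum, which is the inequality.

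The only delicate point is the even-offset reduction, namely checking that restricting $K$ to even integers loses nothing when the breakpoints are at even values. This is handled exactly as in Theorem~\ref{thm:diam}, so I expect no real obstacle here; the substantive difficulty is deferred to the extremal inequality itself.
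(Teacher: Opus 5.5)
Your proposal is correct and matches the paper's proof essentially step for step. The parity of normalised heights, $f(v)\equiv \dgrid\big((1,1),v\big) \pmod 2$, makes $h'-h$ even, and edge changes in $\{-2,0,2\}$ give the $1$-Lipschitz bound. The even-breakpoint argument from Theorem~\ref{thm:diam} then turns~\eqref{eq:distformula} into $\disp(\phi)$, and maximising over pairs and enlarging to all integer $1$-Lipschitz functions yields the equality and the inequality.
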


\begin{proof}
The halved difference is integer-valued because any two OFG heights agree modulo
$2$ at every cell; it is $1$-Lipschitz because $h'-h$ changes by $0$ or $\pm2$
across each edge; and that evenness places the distance-formula minimum at an even
$K$, identifying $\dgrid_{\OFG}(h,h')$ with $\disp(\phi)$. In detail, let
$g := h' - h$. On the bipartite grid $G_{m,n}$, every height function $f$
with $f(1,1) = 0$ satisfies $f(v) \equiv \dgrid\big((1,1), v\big) \pmod 2$,
since $f$ and $\dgrid\big((1,1), \cdot\big)$ both change by $1$ across each
edge and agree at $(1,1)$. Applying this to both $h$ and $h'$, the difference
$g$ is even at every vertex and $\phi = g/2$ is integer-valued. Across each
edge $g$ changes by $(\pm 1) - (\pm 1) \in \{-2, 0, 2\}$, so $\phi$ is
$1$-Lipschitz. Because $g$ is even, the convex map
$K \mapsto \sum_v |g(v) - K|$ has all breakpoints at even integers, so attains
its minimum at an even $K$, as in the proof of Theorem~\ref{thm:diam}. This
gives $\dgrid_{\OFG}(h, h') = \disp(\phi)$. Taking the maximum of both sides
over $h, h' \in V(\OFG(\Mm))$ yields the equality; the realisable $\phi$ are a
subset of the integer $1$-Lipschitz functions on $G_{m,n}$, giving the
inequality.
\end{proof}

By Proposition~\ref{prop:reduction} and Theorem~\ref{thm:diam}, the diameter
equals $D(m,n)$ as soon as no integer $1$-Lipschitz function on $G_{m,n}$,
realisable or not, has dispersion exceeding $D(m,n)$:
\begin{equation}\label{eq:extremal}
  \max_\phi \disp(\phi) \le D(m,n).
\end{equation}

\begin{proposition}[Two-row case]\label{prop:extremal-m2}
$\diam \OFG(M_{2,n}) = \lceil n^2/2 \rceil$ for all $n \ge 2$.
\end{proposition}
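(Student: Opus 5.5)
The lower bound $\diam \OFG(M_{2,n}) \ge D(2,n) = \lceil n^2/2\rceil$ is already given by Theorem~\ref{thm:diam}. So by Proposition~\ref{prop:reduction} it suffices to prove the extremal inequality~\eqref{eq:extremal} for $m=2$: every integer $1$-Lipschitz function $\phi$ on $G_{2,n}$ has $\disp(\phi)\le\lceil n^2/2\rceil$. I would work with the level-crossing identity already quoted in the proof of Theorem~\ref{thm:diam}. Since adding a constant does not change $\disp$, it applies to any integer function $\phi$ on a set of $N=2n$ cells:
\[
  \disp(\phi)=\sum_{\ell\in\Z}\min\bigl(C_\phi(\ell),\,2n-C_\phi(\ell)\bigr),
  \qquad C_\phi(\ell)=\#\{v:\phi(v)\le \ell\}.
\]
The task is then to bound each term, or the sum, using only the Lipschitz condition.

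The key step is a structural control of the sublevel sets on a $2\times n$ grid. Write $a_j=\phi(1,j)$ and $b_j=\phi(2,j)$. The Lipschitz condition gives $|a_j-b_j|\le1$, $|a_j-a_{j+1}|\le1$ and $|b_j-b_{j+1}|\le1$. Hence the column ranges $[\min(a_j,b_j),\max(a_j,b_j)]$ form a sequence of intervals of length at most one, and consecutive intervals overlap or touch. In particular the number of distinct levels is at most $n+1$. More usefully, I would compare $\phi$ with a rearrangement. For each level $\ell$ the count $C_\phi(\ell)$ is a count of cells, and I claim the sequence $C_\phi(\ell)$, as $\ell$ increases through the range, can climb from $0$ to $2n$ no more slowly than the sequence $C_g(\ell)$ for the gradient $g(i,j)=i+j$. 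That sequence rises by $1,2,2,\dots,2,1$. Concretely, I would show that for every integer $k\ge 0$ and every level $\ell$, the quantity $\min(C_\phi(\ell),2n-C_\phi(\ell))$ summed over levels is dominated by the corresponding sum for $g$. Equivalently, the levels where $\phi$ is ``balanced'' are few.

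The cleanest route I would try is an exchange or induction on $n$. I would delete the last column $(a_n,b_n)$ and use induction, $\disp(\phi|_{n-1})\le\lceil (n-1)^2/2\rceil$, with the median $K$ of the full function. The added column contributes $|a_n-K|+|b_n-K|$, and the median of the truncation shifts by a bounded amount. So I would bound
\[
  \disp(\phi)\le \disp(\phi|_{n-1}) + |a_n-K|+|b_n-K|.
\]
I would choose $K$ as a median of the truncated function and use the Lipschitz chain along row $1$ and row $2$ to bound $|a_n-K|$. The obstacle is that this naive bound gives roughly $2(n-1)$ growth per column, whereas $\lceil n^2/2\rceil-\lceil (n-1)^2/2\rceil\approx n$. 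So the induction must instead peel off a column on the side \emph{farther} from the median, or peel both end columns at once. Peeling both ends gives increment $\lceil n^2/2\rceil-\lceil (n-2)^2/2\rceil=2n-2$. Here the two extreme columns lie at distance at most about $n/2$ from the median level on the $\lceil\cdot\rceil$-balanced side.

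The main obstacle is making this precise. For an optimal $K$ (a median), I need that the two end columns together contribute at most $2n-2$. If $\phi$ is monotone-like, one end is below $K$ and one above. A median at position splitting the cells evenly is reachable from each end column by a Lipschitz path, and the number of cells on each side of the median bounds how far the level can have travelled. I would formalise this as follows: if $|a_1-K|=t$, then at least $t$ columns near the left end carry values on that side of $K$, and similarly on the right. So $t+t'\le n-1$ per row, giving the total $\le 2(n-1)$. This counting-per-row argument, which is where the median property and the Lipschitz condition must interlock, is the step I expect to be delicate. After it, the base cases $n=2,3$ follow by a direct check.
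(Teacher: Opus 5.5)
Your final route differs from the paper's. The paper restricts $\phi$ to the boundary cycle of $G_{2,n}$, which passes through all $2n$ cells, and applies the median argument of \cite{Christensen2025origami} for $1$-Lipschitz functions on that cycle. Your plan is to peel both end columns, take $K$ a median of the middle $2\times(n-2)$ block, and use $\disp(\phi)\le\disp(\phi|_{\mathrm{mid}})+\sum_{\mathrm{ends}}|\phi-K|$ with $\lceil n^2/2\rceil-\lceil (n-2)^2/2\rceil=2n-2$. That plan can work, but the key step has a genuine gap. First, $K$ must be a median of the middle block, not of $\phi$; with a median of $\phi$ the comparison with $\disp(\phi|_{\mathrm{mid}})$ runs the wrong way. (Your opening claim that $C_\phi$ climbs no more slowly than $C_g$ only restates the goal.) Second, and more seriously, your justification that the four end cells contribute at most $2n-2$ is false. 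The per-row bound $t+t'\le n-1$ holds when the two ends of a row lie weakly on opposite sides of $K$, where it is just $|a_1-a_n|\le n-1$. It fails when they lie on the same side, because nothing stops a whole row from sitting strictly above $K$. For $n=4$, take row~$1$ to be $(K+2,K+1,K+1,K+2)$ and row~$2$ to be $(K+1,K,K,K+1)$. Then $K$ is a median of the middle values $\{K,K,K+1,K+1\}$, yet row~$1$ has $t=t'=2$, so $t+t'=4>n-1$.

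The total bound $2n-2$ is still true for $n\ge4$, but you must use the median constraint jointly across both rows.
\begin{itemize}
\item If neither row has both end deviations strictly of one sign, your per-row Lipschitz bound gives $2(n-1)$.
\item Otherwise, say row~$1$ has both ends strictly above $K$. The vertical end edges then put all four end cells weakly above $K$. An end cell at height $K+x$ forces the $x-1$ nearest middle cells of its row strictly above $K$.
\item If in neither row the runs forced from the two ends cover the whole middle of that row, this gives at least $\sum\max(0,x-1)\ge\sum x-4$ middle cells above $K$, summing over the four end deviations $x$. A median of the $2(n-2)$ middle cells allows at most $n-2$ of them, so $\sum x\le n+2$.
\item If the runs cover some row, that row alone supplies $n-2$ middle cells above $K$, so the other row's middle is $\le K$. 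The vertical edges then pin the two middles to $K+1$ and $K$, and the end deviations are at most $2,2,1,1$, totalling $6$.
\end{itemize}
Since $n+2\le 2n-2$ and $6\le 2n-2$ for $n\ge4$, the induction closes from your base cases $n=2,3$. With this repair your argument is a self-contained alternative proof. The paper's cycle reduction is shorter and shows directly why two rows are special: a single Hamiltonian cycle runs through every cell. Its cost is that it relies on the cited argument.
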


\begin{proof}
For $m=2$, the argument of \cite[\S5]{Christensen2025origami} bounds
$\disp(\phi)$ by $D(2,n)$ for every integer $1$-Lipschitz $\phi$ on $G_{2,n}$,
realisable or not, using only that $\phi$ varies by at most one across each edge
of the two-row grid's boundary cycle. This is~\eqref{eq:extremal} for $m=2$, so
Proposition~\ref{prop:reduction} and Theorem~\ref{thm:diam} give
$\diam \OFG(M_{2,n}) = D(2,n) = \lceil n^2/2 \rceil$.
\end{proof}

For $m \ge 3$, inequality~\eqref{eq:extremal} was verified by enumerating every
integer $1$-Lipschitz function, up to an additive constant, on the grids up to
$4 \times 4$ and $3 \times 5$, and the diameter $D(m,n)$ was confirmed by direct
computation in $\OFG(\Mm)$ for the computationally accessible grids with
$m \le 5$. Three reductions sharpen the open problem of establishing
\eqref{eq:extremal} for all $m \ge 3$: a level-crossing reformulation, a
reduction to coordinatewise-monotone functions (Proposition~\ref{prop:monotone}),
and a vertex-isoperimetric reduction (Lemma~\ref{lem:Ninit}) leaving a single
one-dimensional conjecture.

First, $\disp(\phi)$ depends only on the multiset of values of $\phi$, and a
$1$-Lipschitz function on $G_{m,n}$ has range at most the grid diameter $m+n-2$.
Writing $c_\ell$ for the number of cells with $\phi \le \ell$, the
level-crossing identity gives
$\disp(\phi) = \sum_\ell \min(c_\ell,\, mn - c_\ell)$. The sublevel sets
$S_\ell=\{\phi\le\ell\}$ are nested, and $1$-Lipschitzness forces
$N[S_\ell]\subseteq S_{\ell+1}$, where $N[A]$ is $A$ together with all cells
adjacent to it; maximising $\disp$ is thus maximising this sum over the
size-sequences of such chains. The antidiagonal realises
$N[S_\ell]=S_{\ell+1}$ at every level, the slowest growth the nesting allows.
Second, the problem reduces to coordinatewise-monotone $\phi$.

\begin{proposition}[Monotone reduction]\label{prop:monotone}
Every integer $1$-Lipschitz function on $G_{m,n}$ has the same multiset of
values, and hence the same dispersion, as one that is nondecreasing in each
coordinate. It therefore suffices to prove~\eqref{eq:extremal} for
functions nondecreasing in both $i$ and $j$.
\end{proposition}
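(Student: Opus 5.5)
The plan is a two-pass sorting argument. First sort the entries within each row, then sort the entries within each column. Permuting values inside rows or columns never changes the multiset of values of $\phi$. Since $\disp(\phi)$ depends only on that multiset (it is $\min_K\sum_v|\phi(v)-K|$), the dispersion is unchanged. It remains to check that each pass preserves integer $1$-Lipschitzness and that the final function is nondecreasing in both coordinates.

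Two elementary facts about integer sequences drive everything.

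\emph{Fact (a).} If $x_1,\dots,x_n$ are integers with $|x_{j+1}-x_j|\le1$, then the set of values is an integer interval, by a discrete intermediate value argument. Hence its nondecreasing rearrangement has consecutive gaps in $\{0,1\}$.

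\emph{Fact (b).} Sorting is $1$-Lipschitz in the sup norm. If $|x_k-y_k|\le1$ for all $k$, then the sorted sequences $x^\uparrow,y^\uparrow$ satisfy $|x^\uparrow_k-y^\uparrow_k|\le1$ for all $k$. I would prove this by noting that $x^\uparrow_k\le t$ iff $\#\{l:x_l\le t\}\ge k$, and that $y_l\le t-1$ implies $x_l\le t$. A companion monotonicity fact also holds: if $x_k\le y_k$ for all $k$, then $x^\uparrow_k\le y^\uparrow_k$.

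\emph{Row pass.} Replace each row of $\phi$ by its sorted version to obtain $\phi'$. Horizontal edges satisfy the Lipschitz condition by Fact (a) applied to each original row, which is $1$-Lipschitz along the path. Vertical edges compare adjacent rows, which originally differed entrywise by at most one. By Fact (b) the sorted rows still differ entrywise by at most one. So $\phi'$ is integer $1$-Lipschitz and nondecreasing in $j$.

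\emph{Column pass.} Sort each column of $\phi'$ to obtain $\phi''$. Vertical edges are handled by Fact (a), since each column of $\phi'$ is $1$-Lipschitz along the column path. Horizontal edges are handled by Fact (b), since adjacent columns of $\phi'$ differ entrywise by at most one. Finally, $\phi''$ is still nondecreasing in $j$: column $j$ of $\phi'$ is entrywise at most column $j+1$, and the monotonicity companion of Fact (b) carries this through sorting. Thus $\phi''$ is nondecreasing in both $i$ and $j$, and has the same multiset of values as $\phi$.

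The only step needing real care is Fact (b) together with its monotone companion. I would prove both via the counting characterisation of order statistics. The conclusion that it suffices to prove~\eqref{eq:extremal} for coordinatewise-monotone $\phi$ then follows immediately from the invariance of $\disp$ under rearranging the multiset.
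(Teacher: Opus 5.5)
Your proposal is correct and follows essentially the same route as the paper. It row-sorts and then column-sorts, and uses the same three facts: sorting a $1$-Lipschitz integer sequence gives gaps in $\{0,1\}$, sorting preserves entrywise closeness within one, and sorting preserves entrywise order, which keeps the rows nondecreasing through the column pass. The only difference is cosmetic: you prove the order-statistic facts by counting sublevel sets, where the paper counts the entries at or above a value.
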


\begin{proof}
Given $\phi$, let $\phi^\ast$ be its monotone rearrangement: first replace
each row by its increasing rearrangement, then replace each column of the result
by its increasing rearrangement. Each step only permutes values within a row or
column, so
the grid value-multiset, and hence $\disp$, is unchanged.

That $\phi^\ast$ is $1$-Lipschitz and nondecreasing rests on three properties of
the increasing rearrangement $x^\uparrow$ of an integer sequence of length $L$.
Rearrangement preserves $1$-Lipschitzness, because a $1$-Lipschitz
integer sequence takes every value between its extremes, so $x^\uparrow$ has
consecutive differences in $\{0,1\}$. It also preserves entrywise closeness: if
$\lvert x_k-y_k\rvert\le1$ for all $k$, then
$\lvert x^\uparrow_k-y^\uparrow_k\rvert\le1$ for all $k$. To see this, at least
$L-k+1$ entries of $x$ are $\ge x^\uparrow_k$, hence at least $L-k+1$ entries of
$y$ are $\ge x^\uparrow_k-1$, leaving at most $k-1$ entries of $y$ below
$x^\uparrow_k-1$, so $y^\uparrow_k\ge x^\uparrow_k-1$; exchanging $x$ and $y$
gives $x^\uparrow_k\ge y^\uparrow_k-1$, and the two bounds combine to
$\lvert x^\uparrow_k-y^\uparrow_k\rvert\le1$. Finally, it preserves order: if
$x_k\le y_k$ for all $k$, then $x^\uparrow_k\le y^\uparrow_k$ for all $k$, since
at least $k$ entries $y_i$ satisfy $y_i\le y^\uparrow_k$, each with
$x_i\le y_i\le y^\uparrow_k$, so at least $k$ entries of $x$ lie at or below
$y^\uparrow_k$, giving $x^\uparrow_k\le y^\uparrow_k$.

Applying these to $\phi$, sorting each row makes the rows $1$-Lipschitz, and
since vertically adjacent cells of $\phi$ differ by at most $1$, the sorted
adjacent rows stay within $1$ entrywise, so the columns remain $1$-Lipschitz.
Sorting each column of the result then makes the columns $1$-Lipschitz; the
columns of the row-sorted matrix are entrywise ordered and at most $1$ apart, so
the column sort leaves the rows nondecreasing and $1$-Lipschitz. Hence
$\phi^\ast$ is $1$-Lipschitz and nondecreasing in both coordinates.
\end{proof}

For monotone $\phi$ the sublevel sets $S_\ell$ are order ideals (staircases).
Third, the vertex-isoperimetric structure of these order ideals reduces the
upper bound, unconditionally, to a one-dimensional extremal problem. The nesting
$N[S_\ell]\subseteq S_{\ell+1}$ forces
$c_{\ell+1}\ge|N[S_\ell]|\ge\nu(c_\ell)$, where $\nu(s)$ is the least value of
$|N[A]|$ over order ideals $A$ with $|A|=s$; the size-sequence is thus a
\emph{$\nu$-chain}, $c_{\ell+1}\ge\nu(c_\ell)$. By the symmetry
$G_{m,n}\cong G_{n,m}$, take $m\ge n$. The cells are ordered by the
\emph{simplicial order}: by $i+j$, ties broken by increasing $i$; write $I(s)$
for the \emph{initial segment} of size $s$. The
antidiagonal function $a(i,j)=i+j$ is itself $1$-Lipschitz, so its sublevel sets
$\{a\le\ell\}$ are initial segments forming a $\nu$-chain, of dispersion
$\disp(a)=D(m,n)$.

\begin{lemma}[Neighbourhood of an initial segment]\label{lem:Ninit}
For $m\ge n$ and every $s$, the closed neighbourhood $N[I(s)]$ is again an
initial segment.
\end{lemma}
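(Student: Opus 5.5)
The plan is to describe $I(s)$ explicitly in terms of antidiagonals and then compute $N[I(s)]$ one antidiagonal at a time. Write $A_u=\{(i,j): i+j=u\}$ for the antidiagonals of $G_{m,n}$. On $A_u$ the valid row indices form an interval $[\alpha(u),\beta(u)]$, where $\alpha(u)=\max(1,u-n)$ and $\beta(u)=\min(m,u-1)$. In the simplicial order the cells of $A_u$ come in increasing $i$, that is, from the top-right end. So every initial segment has the form
\[
  I(s) \;=\; \bigcup_{u\le t} A_u \;\cup\; \{(i,t+1-i): \alpha(t+1)\le i\le \alpha(t+1)+k-1\},
\]
with $0\le k<|A_{t+1}|$. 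The degenerate cases $s=0$ and $s=mn$ are trivial and I would dispose of them first. Conversely, a set of this shape is an initial segment: all of $A_u$ for $u\le t$, followed by a top-justified run of consecutive rows in $A_{t+1}$.

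\emph{Step 1: the full antidiagonals.} Every grid neighbour of a cell in $A_u$ lies in $A_{u-1}\cup A_{u+1}$. Hence $N[\bigcup_{u\le t}A_u]\subseteq\bigcup_{u\le t+1}A_u$. For the reverse inclusion, take any cell $(i,j)\in A_{t+1}$ with $t\ge 2$. Then $i\ge2$ or $j\ge2$, so the cell $(i-1,j)$ or $(i,j-1)$ lies in $A_t$, and $(i,j)$ is covered. Thus the full part of $I(s)$ contributes exactly $\bigcup_{u\le t+1}A_u$. In particular, when $k=0$ the claim already holds.

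\emph{Step 2: the partial antidiagonal.} For $k\ge1$, the only new cells come from the partial run $(i,t+1-i)$ with $i_0:=\alpha(t+1)\le i\le i_0+k-1$. Their neighbours in $A_{t+2}$ are the cells $(i+1,t+1-i)$, valid when $i+1\le m$, and $(i,t+2-i)$, valid when $i\ge t+2-n$. So the covered rows of $A_{t+2}$ are $[i_0,i_0+k-1]\cup[i_0+1,i_0+k]$, intersected with $[\alpha(t+2),\beta(t+2)]$. This is the interval $[i_0,i_0+k]$ clipped to the valid range. The key observation is that $\alpha(t+2)\in\{i_0,i_0+1\}$. If $\alpha(t+2)=i_0+1$, then that row is reached by the downward move from row $i_0$, and it is on the grid precisely because it is a valid row of $A_{t+2}$. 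Either way, the covered set is a run of consecutive rows starting at $\alpha(t+2)$, i.e.\ a top-justified prefix of $A_{t+2}$.

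Combining the two steps, $N[I(s)]$ is all of $\bigcup_{u\le t+1}A_u$ together with a prefix of $A_{t+2}$, which is again an initial segment. If that prefix happens to be all of $A_{t+2}$, the result is still an initial segment.

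The main obstacle is the bookkeeping at the grid boundary. Specifically, Step 2 must hold in every regime: when $A_{t+1}$ is clipped by the top edge and the right edge, as well as when the run reaches the bottom row. Here I would check carefully that the clipping of $A_{t+2}$ happens only at its two ends and never splits the covered run. I would also check explicitly where the hypothesis $m\ge n$ enters. The computation above appears to use only the tie-breaking convention (increasing $i$), so I expect $m\ge n$ to be needed not for this lemma but later, where the initial segments must minimise $\nu$. I would confirm this on small cases before finalising, for instance $G_{3,2}$ against $G_{2,3}$.
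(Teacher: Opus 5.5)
Your proof is correct and follows essentially the paper's route: the full antidiagonals are carried onto exactly one further full antidiagonal, and the top-justified partial run spreads to a top-justified prefix of the next antidiagonal. Your observation $\alpha(t+2)\in\{i_0,i_0+1\}$ does the work of the paper's two cases, which are split according to whether the first cell of the run lies on the right wall $j=n$.

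Your suspicion about the hypothesis is also right. Neither argument actually uses $m\ge n$. That hypothesis matters only for the isoperimetric optimality of the segments (compare the paper's $G_{3,4}$ example), not for this closure property.
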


\begin{proof}
An initial segment is a run of full antidiagonals capped by a left-justified
partial one. Applying $N$ completes the capped antidiagonal and opens the next,
advancing the cap one level; because $m\ge n$ the new cap stays left-justified, so
the image is again an initial segment. In detail, list the cells of antidiagonal
$\{i+j=d\}$ by increasing $i$; its least row is $i_d=\max(0,d-n+1)$. First, $N$
sends the union of antidiagonals $0,\dots,d-1$ onto the union of antidiagonals
$0,\dots,d$: each cell $(i,j)$ with $i+j=d$ has a neighbour $(i-1,j)$ or $(i,j-1)$
on antidiagonal $d-1$, while no neighbour of a cell on an antidiagonal at most
$d-1$ lies beyond antidiagonal $d$. This settles an initial segment that is a full
union of antidiagonals. Otherwise $I(s)$ is antidiagonals $0,\dots,d-1$ together
with the first $k$ cells of antidiagonal $d$, those in rows $i_d,\dots,i_d+k-1$,
with $k$ below the antidiagonal's length. The cells of $N[I(s)]$ outside
antidiagonals $0,\dots,d$ are the up-neighbours $(i,j+1)$ and $(i+1,j)$ of these
$k$ cells, all on antidiagonal $d+1$. Because $m\ge n$ the right wall $j=n-1$ is
met no later than the bottom wall $i=m-1$: when $d\ge n-1$ the lowest of the $k$
cells, $(i_d,n-1)$, has no right-neighbour, so the rows reached on antidiagonal
$d+1$ are $i_d+1,\dots,i_d+k$, the initial block from its least row
$i_{d+1}=i_d+1$; when $d<n-1$ one has $i_d=i_{d+1}=0$ and the rows reached are
$0,\dots,k$, again an initial block. Either way $N[I(s)]$ is antidiagonals
$0,\dots,d$ followed by an initial block of antidiagonal $d+1$, hence an initial
segment.
\end{proof}

By Lemma~\ref{lem:Ninit} each sublevel set of the antidiagonal chain is the
closed neighbourhood of the previous, $|N[I(c_\ell)]|=c_{\ell+1}$. Write the closed
neighbourhood of an order ideal as $N[A]=A\sqcup\nabla A$, where $\nabla A$ is the
upper shadow, the cells outside $A$ that cover a cell of $A$ in the grid
order, so that $|N[A]|=|A|+|\nabla A|$. Moreover, the
antidiagonal segment realises $|N[I(s)]|=\nu(s)$ at every size: equivalently the
staircase minimises the upper shadow among Young diagrams of given area in the
$m\times n$ box. This vertex-isoperimetric optimality was verified by
exhaustive enumeration over order ideals on every grid with $m\ge n$ through
$8\times6$; granting it, the antidiagonal chain is the pointwise-smallest
$\nu$-chain, as $\nu$ is nondecreasing. The optimality is \emph{not} used in the
reduction below, which rests only on the definition of $\nu$, but it identifies
the antidiagonal as the natural extremal chain. Orientation is essential to this
optimality: in the wide orientation $m<n$ the segments are not isoperimetric, for
instance on $G_{3,4}$ at sizes $4,5$.

Because $\disp(\phi)=\sum_\ell\min(c_\ell,mn-c_\ell)$ depends only on the
size-sequence, every monotone $\phi$ contributes a $\nu$-chain, while the
antidiagonal chain already attains $D(m,n)$; the upper bound
$\max_\phi\disp(\phi)\le D(m,n)$, and with it $\diam\OFG(\Mm)=D(m,n)$, follows
as soon as no $\nu$-chain exceeds $D(m,n)$. That single statement is all that
remains.

\begin{conjecture}[Slowest-chain conjecture]\label{conj:slowestchain}
For $m\ge n$, among all $\nu$-chains the antidiagonal chain maximises
$\sum_\ell\min(c_\ell,\,mn-c_\ell)$, with maximum value $D(m,n)$.
\end{conjecture}

\noindent Exhaustive enumeration confirms the conjecture through $5\times4$, the
maximum $D(m,n)$ attained by the antidiagonal; a general proof remains open. The
closest approach is a termwise strengthening. Writing
$B_\phi(t)$ for the number of cells whose value lies within $t-1$ of a median of
$\phi$, the identity $\disp(\phi)=\sum_{t\ge1}\bigl(mn-B_\phi(t)\bigr)$ reduces
the upper bound to the band inequality $B_\phi(t)\ge B_a(t)$: the antidiagonal
minimises every median-centred band, confirmed in every case examined. The band
inequality resists a termwise proof, since below the median the antidiagonal
carries the \emph{smallest} sublevel sizes and prevails only by spanning the full
range $m+n-2$, the widest any $1$-Lipschitz function on $G_{m,n}$ attains. A
cruder per-threshold bound provably fails: with
$f(s)=\max\{|X|+|Y|:\dgrid(X,Y)\ge s\}$, the bound
$\disp(\phi)\le\sum_{t\ge1}f(2t)$ already gives $9>8=D(3,3)$ on $G_{3,3}$. The
slowest-chain conjecture is therefore the residual obstruction to the upper bound
for $m\ge3$.

\section{Conclusion}\label{sec:open}

Before this work, the flip-graph invariants of the $m\times n$ Miura-ori $\Mm$
were understood only for two rows, where a per-column recurrence gives the
degree sequence and a median argument gives the diameter. The height-function
reduction of Section~\ref{sec:method} lifts this restriction by turning both
invariants into grid combinatorics, the degree of a vertex into a count of local
extrema and the flip distance into an $\ell_1$ dispersion. This gives exact
counts of the vertices of each degree up to five, together with a closed-form
lower bound on the diameter whose matching upper bound is proved for two rows
and, for any number of rows, reduced to a single extremal inequality.

Two problems remain open. The first, the diameter for three or more rows, now
rests entirely on the slowest-chain conjecture
(Conjecture~\ref{conj:slowestchain}), a purely combinatorial inequality that
enumeration confirms on every grid examined. Because its maximum equals the lower
bound proved here, settling it would pin down the diameter for every grid at
once. The second, the degree sequence beyond degree five, stays a finite
computation in each degree, but the casework grows as the degree rises. Closing
it in general would therefore take a single argument spanning all degrees in
place of the present case-by-case counts. Both are now questions about integer
height functions on a grid, one extremal and one enumerative, open to the
methods developed here.

\bibliographystyle{alpha}
\bibliography{references}

\end{document}